\newtheorem{theorem}{Theorem}[section]
\newtheorem{lemma}[theorem]{Lemma}
\newtheorem{conjecture}{Conjecture}
\newtheorem{definition}[theorem]{Definition}
\newtheorem{proposition}[theorem]{Proposition}
\newtheorem{corollary}[theorem]{Corollary}
\theoremstyle{definition}
\theoremstyle{remark}
\newtheorem*{remark}{Remark}
\numberwithin{equation}{section}
\newcommand{\bR}{\mathbb{R}}
\newcommand{\SL}{\mathsf{SL}}
\newcommand{\SO}{\mathsf{SO}}
\newcommand{\Is}{\mathrm{Is}}
\newcommand{\affIs}{\mathrm{Is}}
\newcommand{\nt}{\mathrm{nt}}
\newcommand{\flow}{\mathsf{U}_0\Gamma}
\newcommand{\cflow}{\widetilde{\mathsf{U}_0\Gamma}}
\newcommand{\bdry}{\partial_\infty\Gamma}
\newcommand{\GL}{\mathsf{GL}}
\newcommand{\lspan}{\mathrm{span}}
\newcommand{\red}{\textcolor{red}}
\begin{document}

\title{Affine Anosov representations and Proper actions}

\author{Sourav Ghosh}

\address{Department of Mathematics, Heidelberg University, Germany \footnote{Present Address: Department of Mathematics, Ashoka University, India}}

\email{sourav.ghosh.bagui@gmail.com, sourav.ghosh@ashoka.edu.in}

\author{Nicolaus Treib}
\address{Department of Mathematics, Heidelberg University, Germany}
\email{nicolaus.treib@mathi.uni-heidelberg.de}
\thanks{The authors acknowledge support from U.S. National Science Foundation grants DMS 1107452, 1107263, 1107367 "RNMS: Geometric Structures and Representation Varieties" (the GEAR Network), and from the European Research Council under ERC-Consolidator grant 614733. The first author was supported by a MATCH (Mathematics Center Heidelberg) fellowship, DFG SPP 2026 grant, OPEN/16/11405402 grant and Ashoka University annual research grant. The second author was supported by the Klaus Tschira Foundation.}


\date{\today}

\dedicatory{}

\keywords{Anosov representations, proper actions}

\begin{abstract}
We define the notion of affine Anosov representations of word hyperbolic groups into the affine group $\SO^0(n+1,n)\ltimes\bR^{2n+1}$. We then show that a representation $\rho$ of a word hyperbolic group is affine Anosov if and only if its linear part $\mathtt{L}_\rho$ is Anosov in $\mathsf{SO}^0(n+1,n)$ with respect to the stabilizer of a maximal isotropic plane and $\rho(\Gamma)$ acts properly  on $\mathbb{R}^{2n+1}$.
\end{abstract}

\maketitle
\tableofcontents

\pagebreak

\section*{Introduction}

In this article, we relate the two diverse fields of Anosov representations and Margulis spacetimes. 

Anosov representations of a word hyperbolic group $\Gamma$ into a semisimple Lie group $G$ are certain stable representations whose stability conditions are given in terms of the Gromov flow space $\flow$ of the hyperbolic group. Labourie \cite{Labourie} introduced the notion of an Anosov representation of a closed surface group into $\SL(n,\bR)$ in order to study Hitchin representations. The definition was later extended to representations of word hyperbolic groups into general semisimple Lie groups by Guichard--Wienhard \cite{GW2}. This class of representations has been studied intensively since its introduction, partially due to the fact that it serves as a possible generalization of convex cocompactness to higher rank Lie groups. Recently, Kapovich--Leeb--Porti \cite{KLP} gave a purely geometric characterization of an Anosov representation. However, in this article we will stick to the original dynamical definition of an Anosov representation.

It is natural to wonder what happens to the notion of an Anosov representation when one drops the condition of semisimplicity of the Lie group. In particular, what would be the appropriate notion of an Anosov representation of a word hyperbolic group $\Gamma$ into a Lie group $G\ltimes V$, where $G$ is semisimple and $V$ is a vector space on which $G$ act as linear transformations. Initial work trying to answer some of these questions was done by Ghosh (\cite{Ghosh1},\cite{Ghosh2}). Ghosh introduced the notion of an affine Anosov representation of a free non-abelian group $\Gamma$ into $\mathsf{SO}(2,1)\ltimes \bR^3$ and used it to study Margulis spacetimes.

Margulis spacetimes are quotient manifolds of $\bR^3$ under proper and free actions of a non-abelian free group as affine transformations with discrete linear part. The study of these spaces started with Milnor asking whether the assumption of cocompactness could be dropped from the Auslander conjecture, which states that affine crystallographic groups are virtually solvable. Using Tits' alternative, dropping the assumption of cocompactness implies that a non-abelian free group cannot act properly on $\bR^3$ as affine transformations. However, Margulis (\cite{Margulis1},\cite{Margulis2}) constructed examples of such actions, thereby showing that the assumption of cocompactness cannot be dropped. Moreover, he introduced an invariant which is now called the Margulis invariant, and used it to provide a necessary condition for the affine action of a free non-abelian group to be proper. Previous work of Fried--Goldman \cite{FG}  implies that the linear parts of free non-abelian groups acting properly as affine transformations on $\bR^3$ necessarily lie in some conjugate of $\mathsf{SO}(2,1)$ in $\mathsf{GL}(3,\bR)$. Furthermore, Drumm \cite{Drumm2} gave a complete characterization of the linear parts of proper affine actions of a free non-abelian group on $\bR^3$. He showed that any discrete subgroup of $\mathsf{SO}(2,1)$ can appear as the linear part of such actions.

Subsequently, Abels--Margulis--Soifer \cite{AMS} showed the existence of discrete subgroups of $\mathsf{SO}(n+1,n)\ltimes\bR^{2n+1}$ acting properly on $\bR^{2n+1}$ when $n$ is odd. They also showed the non-existence of discrete subgroups of $\mathsf{SO}(n+1,n)\ltimes\bR^{2n+1}$ acting properly on $\bR^{2n+1}$ when $n$ is even. Recent works of Smilga (\cite{Smilga},\cite{Smilga2},\cite{Smilga3}) extend these results and show existence of discrete subgroups of $G\ltimes V$ acting properly on $V$ under certain assumptions on the semisimple Lie group $G$.

The Margulis invariant spectrum of a representation $\rho:\Gamma\rightarrow\mathsf{SO}(2,1)\ltimes \bR^3$ of a free non-abelian group $\Gamma$ is a function $\alpha_\rho:\Gamma\rightarrow\bR$. While introducing Margulis spacetimes, Margulis made an observation that the Margulis invariant spectrum of a Margulis spacetime is either completely positive or completely negative. The converse of this question is still open although it has been completely answered by Goldman--Labourie--Margulis \cite{GLM} in the case when the linear part of $\rho$ contains no parabolic elements: Given such a representation $\rho:\Gamma\rightarrow\mathsf{SO}(2,1)\ltimes \bR^3$, 
Goldman--Labourie--Margulis constructed a H\"older continuous function $f_\rho:\flow\rightarrow\bR$ which is unique up to Liv\v sic cohomology (for a definition please see \cite{Liv}; see also Definition 1.3.1 of \cite{Ghosh4}) and showed that for any $\gamma\in\Gamma$,
\[\int_\gamma f_\rho=\frac{\alpha_\rho(\gamma)}{t(\gamma)}\]
where $t(\gamma)$ is the period of the periodic orbit of $\gamma$ in $\flow$. They used this identity to extend the normalized Margulis invariant to a map from the space of flow invariant probability measures on $\flow$ to $\bR$. They showed that under the assumption of the linear part being hyperbolic, complete positivity or complete negativity of the extended Margulis invariant is both necessary and sufficient.

Keeping these ideas in mind we extend the notion of an affine Anosov representation to affine groups of the form $\mathsf{SO}^0(n+1,n)\ltimes\mathbb{R}^{2n+1}$ and show that this extended notion is also an open condition. The idea behind the extended Margulis invariant plays a central role in our conception of an affine Anosov representation into $\mathsf{SO}^0(n+1,n)\ltimes\mathbb{R}^{2n+1}$. Moreover, we provide a necessary and sufficient condition for the action of a deformation of an Anosov representation to be proper, in terms of affine Anosov representations. We prove that:
\begin{theorem}
A representation of a word hyperbolic group $\Gamma$ into $\mathsf{SO}^0(n+1,n)\ltimes\mathbb{R}^{2n+1}$ is affine Anosov if and only if its linear part is Anosov with respect to the stabilizer of a maximal isotropic plane and it acts properly on $\mathbb{R}^{2n+1}$.
\end{theorem}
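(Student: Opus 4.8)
The plan is to prove the two implications separately, using the ``affine Margulis invariant'' $\alpha_\rho\colon\Gamma\to\bR$ together with its regularization along the Gromov flow space $\flow$ as the bridge between them. In both directions the Anosov linear part supplies continuous $\mathtt{L}_\rho$-equivariant boundary maps into the space of maximal isotropic $n$-planes, hence a continuous, flow-equivariant splitting of the flat $\bR^{2n+1}$-bundle over $\flow$ into $E^+\oplus E^0\oplus E^-$ with $\dim E^\pm=n$ and $\dim E^0=1$; the affine cocycle acts on the neutral line $E^0$ by a genuine additive $\bR$-valued cocycle over the flow whose periods are the numbers $\alpha_\rho(\gamma)$, while on $E^\pm$ it is an affine perturbation of the linear Anosov dynamics. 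The affine Anosov condition is, unwound, a uniform exponential contraction of the affine flow on a recentering (the neutralized section) of $E^-$ in forward time and of $E^+$ in backward time; the theorem asserts that this packaged dynamical statement is equivalent to ``linear part Anosov'' plus ``$\rho(\Gamma)$ proper.''

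For the direction affine Anosov $\Rightarrow$ (Anosov linear part)$\,+\,$(proper action), I would first read off the Anosov property of $\mathtt{L}_\rho$ from the linear parts of the affine limit maps and the contraction on $E^\pm$ — this is essentially built into, or an immediate consequence of, the affine Anosov definition. For properness, the uniform contraction forces the neutral cocycle to have a definite sign and linear growth: regularizing it à la Goldman--Labourie--Margulis gives a continuous, nowhere-vanishing $f_\rho\colon\flow\to\bR$ with $\int_\gamma f_\rho=\alpha_\rho(\gamma)/l(\gamma)$, whence $\abs{\alpha_\rho(\gamma)}\geq c\,l(\gamma)$ for a uniform $c>0$. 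Since $\mathtt{L}_\rho$ is Anosov we have $l(\gamma)\to\infty$ as $\gamma\to\infty$ in $\Gamma$, so $\abs{\alpha_\rho(\gamma)}\to\infty$; feeding this into a properness criterion for affine actions with reductive linear part in the style of Benoist, Kobayashi and Abels--Margulis--Soifer, specialized to $\mathsf{SO}^0(n+1,n)\ltimes\bR^{2n+1}$ where the neutral direction is one-dimensional, yields that $\rho(\Gamma)$ acts properly on $\bR^{2n+1}$.

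For the converse, assume $\mathtt{L}_\rho$ is Anosov and $\rho(\Gamma)$ acts properly. The Anosov property again gives the splitting $E^+\oplus E^0\oplus E^-$ and the uniform contraction on $E^\pm$ in the appropriate time directions, so the only thing to produce is control of the neutral line. Here I would run the Goldman--Labourie--Margulis analysis in reverse and in higher-fiber generality: properness, combined with the Anosov closing lemma, density of periodic orbits, and compactness of the space of flow-invariant probability measures on $\flow$, forces the normalized neutral cocycle $\alpha_\rho/l$ to be bounded away from $0$ with a fixed sign. One then builds the neutralized section by cohomologically correcting the tautological affine section against this cocycle, and checks that the correction does not spoil the $E^\pm$ contraction; assembling the linear estimates on $E^\pm$ with the neutral estimate coming from properness gives the affine Anosov property.

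The hard part will be the converse, and within it the step upgrading properness — a statement about individual group elements and about discreteness of $\{\gamma:\rho(\gamma)K\cap K\neq\emptyset\}$ for compact $K$ — into the uniform exponential estimate demanded by the affine Anosov definition. This requires promoting pointwise and periodic data to uniform dynamical data, precisely where the interplay between the Anosov flow structure on $\flow$ and the linear-algebraic fact that the neutral direction of $\mathsf{SO}^0(n+1,n)$ is one-dimensional (so that ``definite sign of the Margulis invariant'' is a meaningful scalar condition) is essential. A secondary but real technical point is verifying that the regularized neutral section is Hölder and equivariant and that the recentering into the neutralized section is compatible with the boundary maps; I expect these to be handled by adapting the GLM cohomological machinery, but they must be done carefully because the fibers $E^\pm$ are now $n$-dimensional rather than one-dimensional.
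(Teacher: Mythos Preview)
Your overall strategy --- using the regularized Margulis invariant as the bridge --- matches the paper's, and you correctly flag the converse as the hard direction. The gap is in the mechanism you propose for it. The tools you list (closing lemma, density of periodic orbits, compactness of invariant measures) set up the ergodic framework but do not by themselves convert ``$\rho(\Gamma)$ acts properly on $\bR^{2n+1}$'' into ``the neutral cocycle has no zero-integral invariant measure''. The paper's key ingredient is a \emph{properness duality} (Lemma~5.2 of \cite{GLM}): $\Gamma$ acts properly on $\bR^{2n+1}$ iff the $\bR$-flow acts properly on the bundle $\mathsf{R}_\rho = \Gamma\backslash(\cflow\times\bR^{2n+1})$. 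With this in hand the argument is short: if no section $\tau$ satisfies $|\langle\nabla_\phi\tau\mid\nu\circ\sigma_\varrho\rangle|>0$, a Liv\v sic argument produces an invariant measure with integral zero, and connectedness of $\flow$ (Lemma~\ref{Lem:Flowspace_connected}) then yields, for every $t>0$, a point $p_t$ with $\tau(\phi_t p_t)=\phi_t\tau(p_t)$, so the $\bR$-flow fails to be proper on the compact set $\tau(\flow)$ --- contradicting the duality. Neither the duality nor the connectedness step appears in your sketch. Note also that, by Proposition~\ref{prop:limitmap}, the affine Anosov section with contraction on $\sigma^*X^\pm_{\mathrm{aff}}$ exists \emph{automatically} whenever the linear part is Anosov; the contraction estimates you plan to assemble are therefore free, and the entire content of the converse is condition~(2) of Definition~\ref{Def:affine_Anosov}.

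For the forward direction your plan is sound, but the ``properness criterion in the style of Benoist--Kobayashi'' is for reductive homogeneous spaces and does not apply directly here. The paper instead argues by contradiction: given a non-properness witness $\rho(\gamma_m)x_m\to y$, (AMS)-proximality of $\varrho(\Gamma)$ is used to arrange that $\gamma_m^\pm\to a^\pm$ with $a^+\neq a^-$ and $l(\gamma_m)\to\infty$; then $\alpha_\rho(\gamma_m)=\langle\rho(\gamma_m)x_m-x_m\mid\nu\circ\tilde\sigma_\varrho(p_m)\rangle$ is seen directly to stay bounded, contradicting $\alpha_\rho(\gamma_m)\geq M\,l(\gamma_m)\to\infty$ from condition~(2). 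This is essentially the criterion you want, proved in situ.
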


The above theorem is an extension of the corresponding result for the Fuchsian case by Goldman--Labourie--Margulis \cite{GLM}. Danciger--Drumm--Goldman--Smilga \cite{DDGS} recently did a nice survey of this theory which is about to appear for the Margulis Festschrift.
\section*{Organization of the paper}
Sections 1 to 3 cover some preliminaries: In Section 1, we show how to consistently choose orientations on maximal isotropic planes in $\bR^{p,q}$. This is be needed to define the neutral section later on (Definition \ref{def:neutral_section}).\\
Section 2 covers basics about the Gromov geodesic flow. In particular, we show that the flow space is connected (Lemma \ref{Lem:Flowspace_connected}).\\
In Section 3, we first recall the general definition of an Anosov representation before specializing to $\SO^0(n+1,n)$ and showing some contraction properties of associated bundles in Corollary \ref{Cor:Anosov_contraction}.\\
Section 4 contains the main definition of an affine Anosov representation (Definition \ref{Def:affine_Anosov}).\\
In Section 5, we show that affine limit maps for affine representations always exist whenever the linear part of the representation is Anosov (Proposition \ref{prop:limitmap}).\\
In Section 6, we recall some known results about Margulis spacetimes. They are then related to affine Anosov representations in Section 7, where we show in Theorem \ref{thm:proper_affine_Anosov} that affine Anosov representations give rise to Margulis spacetimes. A partial converse of this is shown in Theorem \ref{thm:affine_Anosov_proper}: If the linear part of the holonomy representation of a Margulis spacetime is Anosov, then the holonomy representation is necessarily affine Anosov.\\
The appendix contains some technical points that came up along the way: One is the notion of (AMS)-proximality introduced by Abels--Margulis--Soifer and its consequences. The other subsection deals with the existence of H\"older continuous sections in certain bundles over the flow space which are differentiable along flow lines.

\section*{Acknowledgements} 

We would like to thank Prof. Fran\c cois Labourie and Prof. Anna Wienhard for many helpful conversations. We would also like to thank the anonymous referees for their suggestions.

\section{Consistent orientations}   \label{sec:indefinite_orthogonal}

In this section, we state some results about indefinite orthogonal groups and orientations on certain subspaces of $\bR^{p,q}$ which will prove useful later on. All of this is well-known and included only for the reader's convenience.

Let $p > q$, and let $\bR^{p,q}$ denote the vector space $\bR^{p+q}$, equipped with an indefinite symmetric bilinear form $b_{p,q}$ of signature $(p,q)$. In this section, it will be useful to work in a basis such that the form is given by
\[ I_{p,q} = \begin{pmatrix} 1 \\ & \ddots \\ & & 1 \\ & & & -1 \\ & & & & \ddots \\ & & & & & -1 \end{pmatrix}. \]
Furthermore, let $\pi_p$ and $\pi_q$ denote the two projections corresponding to the splitting
\[ \bR^{p+q} = ( \bR^p \times \{0\} ) \oplus ( \{0\} \times \bR^q ). \]
We will need to consider the space
\[ X_{p,q} := \{ V \subset \bR^{p+q} \ | \ \dim(V) = q, \ b_{p,q}|_{V\times V} \ \text{is negative definite} \}; \]
it is a model for the symmetric space associated to $\SO(p,q)$ and can be identified with $\SO(p,q)/\mathsf{S}(\mathsf{O}(p)\times\mathsf{O}(q))$. It is simply connected, which we can in fact see directly by the following argument.
\begin{lemma}
The space $X_{p,q}$ is contractible.
\begin{proof}
We construct a deformation retraction 
\[ X_{p,q} \times [0,1] \xrightarrow{f} X_{p,q}  \]
onto the point $\{0\} \times \bR^q$, where $f(\cdot,0) = \mathrm{Id}$ and $f(\cdot,1)$ is the constant map with image $\{0\} \times \bR^q$. Decompose any vector $v\in\bR^{p+q}$ as $v=v_p + v_q$, where $v_p = \pi_p(v)$, $v_q = \pi_q(v)$, and consider the map
\begin{align*}
    g: \bR^{p+q} \times [0,1] & \to \bR^{p+q} \\
    (v_p + v_q,t) & \mapsto (1-t)v_p + v_q.
\end{align*}
We observe the following:
\begin{itemize}
 \item If $ b_{p,q}(v\mid v)  < 0$, then $b_{p,q}(g(v,t)\mid g(v,t)) <0 \ \forall t$.
 \item For any $V \in X_{p,q}$, the projection $\pi_q$ restricts to an isomorphism $V \xrightarrow{\cong} \{0\} \times \bR^q$: Otherwise $V$ would have to be contained in the subspace $\pi_q^{-1}(\pi_q(V))$ of signature $(p,q')$ with $\dim\pi_q(V)=q'<q$, a contradiction.
\end{itemize}
Therefore, $g$ induces the desired map $f$.
\end{proof}
\end{lemma}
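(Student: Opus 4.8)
The plan is to construct an explicit deformation retraction of $X_{p,q}$ onto the single point $\bR^p\times\{0\}$, which indeed lies in $X_{p,q}$ since $b_{p,q}$ restricted to it is the standard positive definite form. The structural fact that makes this work is that $\pi_+$ restricts to a linear isomorphism $V\xrightarrow{\cong}\bR^p\times\{0\}$ for every $V\in X_{p,q}$: if $\pi_+|_V$ had a nonzero vector $v$ in its kernel, then $v_+=0$ and $b_{p,q}(v,v)=-\abs{v_-}^2\le 0$, contradicting positive definiteness; since $\dim V=p=\dim(\bR^p\times\{0\})$, injectivity upgrades to an isomorphism. Hence every $V\in X_{p,q}$ is the graph of a unique linear map $\bR^p\to\bR^q$, and the idea is to scale these graphs down to the zero graph.

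To turn this into a continuous homotopy I would not deform the graphs directly but act by the ambient linear maps $g_t\colon\bR^{p+q}\to\bR^{p+q}$, $g_t(v_++v_-)=v_++(1-t)v_-$, and set $f(V,t):=g_t(V)$. Two things need to be checked. First, $g_t(V)$ is again a $p$-plane on which $b_{p,q}$ is positive definite: for $0\le t\le 1$ one has $b_{p,q}(g_tv,g_tv)=\abs{v_+}^2-(1-t)^2\abs{v_-}^2\ge\abs{v_+}^2-\abs{v_-}^2=b_{p,q}(v,v)$, so positivity persists, and in particular $g_t$ kills no positive vector, hence is injective on $V$, so $\dim g_t(V)=p$. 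Second, $V\mapsto g_t(V)$ must be continuous on the Grassmannian; in the chart given by graphs over $\bR^p\times\{0\}$ this is transparent, because if $V=\mathrm{graph}(A_V)$ then $g_t(V)=\mathrm{graph}((1-t)A_V)$, so in that chart the homotopy is simply $(A,t)\mapsto(1-t)A$, continuous on all of $[0,1]$. Since $f(\cdot,0)=\mathrm{Id}$ and $f(\cdot,1)$ is constant with value $\bR^p\times\{0\}$, $f$ is the sought deformation retraction.

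The main point requiring care is precisely the continuity of $f$ up to and including $t=1$, where $g_t$ stops being invertible; working in the graph chart as above resolves it cleanly, which is why I would organize the argument around the isomorphism $\pi_+|_V$ from the start. As an alternative that sidesteps all formulas, one could instead cite the identification $X_{p,q}\cong\SO(p,q)/\mathsf{S}(\mathsf{O}(p)\times\mathsf{O}(q))$ with a Riemannian symmetric space of noncompact type, which is diffeomorphic to a Euclidean space via the Cartan decomposition and hence contractible; but the explicit retraction is more elementary and self-contained, so that is the route I would take.
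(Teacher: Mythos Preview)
Your proof is correct and follows essentially the same approach as the paper: both use the ambient linear maps $g_t(v_++v_-)=v_++(1-t)v_-$ to push $V$ onto $\bR^p\times\{0\}$, relying on the fact that $\pi_+|_V$ is an isomorphism. If anything, you are more careful than the paper, since you explicitly address the continuity of $f$ at $t=1$ via the graph chart, whereas the paper leaves this implicit.
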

Using this Lemma, we can describe the two connected components of $\SO(p,q)$. By simple connectivity, it is possible to choose an orientation on each subspace $V\in X_{p,q}$ in a continuous way (which is supposed to mean that for any curve $V_t$, there exist $q$ continuous curves $v_t^i$ such that $(v_t^1,\ldots,v_t^q)$ is a positive basis for $V_t$). An element $A\in\SO(p,q)$ can then either preserve or reverse orientations on the elements of $X_{p,q}$, and a short discussion shows that this distinguishes the two components: \\
Any element $A$ can be deformed to one that fixes $\{0\} \times \bR^q$. To do this, choose a path between $A(\{0\} \times \bR^q)$ and $\{0\} \times \bR^q$ (e.g. the one described in the previous Lemma), then choose a corresponding path $A_t$ in $\SO^0(p,q)$ such that $A_0 = A$ and $A_1$ fixes $\{0\} \times \bR^q$. If $A$ preserves orientations on $X_{p,q}$, then $A_1$ is a transformation in $\SO(p)\times\SO(q)$, which is connected, so $A$ lies in $\SO^0(p,q)$. On the other hand, if $A$ reverses orientations on $X_{p,q}$, it cannot lie in the identity component by continuity of these orientations. By the same argument as before, we can deform $A$ to a fixed standard representative of the second connected component of $\mathsf{S}(\mathsf{O}(p)\times\mathsf{O}(q))$.

In this article, our main interest lies with the space 
\[ \Is_q(\bR^{p,q}) = \{ V \subset \bR^{p+q} \ | \ \dim(V)=q, \ b_{p,q}|_{V\times V} \equiv 0 \} \]
of maximal isotropic subspaces of $\bR^{p,q}$, as well as with stabilizers of such isotropic subspaces in $\SO^0(p,q)$. 

A useful remark is that the above choice of orientations for elements of $X_{p,q}$ induces a consistent choice of orientations for $\Is_q(\bR^{p,q})$ as well:\\
Let $\bR^{p+q} = V_+ \oplus (V_+)^\perp  = V_+ \oplus V_-$ be any orthogonal splitting into a positive definite and a negative definite subspace, and let $\pi_\pm$ denote the corresponding projections. As in the previous Lemma, for any $L\in\Is_q(\bR^{p,q})$, the restriction of $\pi_-$ induces an isomorphism $L\xrightarrow{\cong}V_-$. We use this isomorphism and the orientation on $V_-$ to define an orientation on $L$. Since $X_{p,q}$ is connected and the orientations vary continuously, the induced orientation on $L$ does not depend on the choice of $V_-$. Similarly, this choice of orientations on elements of $\Is_q(\bR^{p,q})$ is continuous.\\
The description of the two connected components of $\SO(p,q)$ now applies in the same way to the action on $\Is_q(\bR^{p,q})$: For $A\in\SO(p,q)$, let $V_-' = A(V_-)$ and $\pi_-'$ be the corresponding projection. Then we have $V_+' = (V_-')^\perp = A(V_+)$, so the diagram
\begin{equation*}
\begin{tikzcd}
L \arrow[r, "A"] \arrow[d, "\pi_-"] & A(L) \arrow[d, "\pi_-'"] \\
V_- \arrow[r, "A"] & V_-'
\end{tikzcd}
\end{equation*}
commutes. Both projections preserve orientation by definition, and the map $A:V_- \to V_-'$ preserves orientation iff $A\in\SO^0(p,q)$, therefore the same is true for the restriction $A:L\to A(L)$.\\
We summarize this paragraph in the following two Propositions:
\begin{proposition}
Let $X_{p,q}$ and $\Is_q(\bR^{p,q})$ be defined as above. Then both $X_{p,q}$ and $\Is_q(\bR^{p,q})$ admit consistent choices of orientations for their elements. The orientations can be chosen to be compatible in the following sense: 

For any $L\in\Is_q(\bR^{p,q})$ and $V_-\in X_{p,q}$, the projection 
\[\pi_-: (V_-)^\perp \oplus V_- \to V_-\]
induces an isomorphism $L\xrightarrow{\cong} V_-$ which is orientation-preserving.
\end{proposition}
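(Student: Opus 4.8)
The plan is to assemble the Proposition directly from the discussion in the paragraph immediately preceding it, organizing that discussion into three logically separate claims and then checking each one. First I would fix, once and for all, a continuous choice of orientation on the elements of $X_{p,q}$; this is legitimate precisely because the preceding Lemma shows $X_{p,q}$ is contractible, hence simply connected, so the orientation double cover over $X_{p,q}$ is trivial and admits a global section. Concretely, pick any positive basis for the base point $\bR^p\times\{0\}$ and propagate it continuously along paths; contractibility guarantees the result is path-independent.

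Next I would transport these orientations to $\Is_p(\bR^{p,q})$ using the projection isomorphism. The key observation, already used in the proof of the Lemma, is that for any orthogonal splitting $\bR^{p+q} = V_+ \oplus (V_+)^\perp = V_+ \oplus V_-$ with $V_+\in X_{p,q}$, the restriction $\pi_+|_L\colon L \to V_+$ is an isomorphism for every $L\in\Is_p(\bR^{p,q})$: if it had a kernel, then $L$ would lie in $\pi_+^{-1}(\pi_+(L))$, a subspace whose form has signature $(p',q)$ with $p'<p$, which cannot contain a $p$-dimensional isotropic subspace. Using this isomorphism I pull back the chosen orientation on $V_+$ to an orientation on $L$. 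To see this is well-defined (independent of $V_+$), note that $X_{p,q}$ is connected and both the orientations on $V_+$ and the projection isomorphisms $\pi_+|_L$ vary continuously as $V_+$ varies; hence the induced orientation on $L$, being a locally constant function of $V_+\in X_{p,q}$ valued in the two-element set of orientations of $L$, is globally constant. The same continuity argument, now letting $L$ vary, shows the resulting choice of orientations on $\Is_p(\bR^{p,q})$ is itself consistent (continuous). The compatibility statement in the Proposition is then true essentially by construction: the induced isomorphism $L\xrightarrow{\cong}V_+$ is orientation-preserving because that is exactly how the orientation on $L$ was defined.

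The main obstacle — really the only place where care is needed — is justifying the independence and continuity claims rigorously rather than by appeal to intuition. The cleanest way I would phrase this is to consider the map sending a pair $(L,V_+)\in \Is_p(\bR^{p,q})\times X_{p,q}$ to the orientation on $L$ pulled back via $\pi_+|_L$; this is a continuous section of a double cover over a connected base (once we fix, say, $L$ and vary $V_+$, or vice versa), hence locally constant, hence constant. A small technical point is to make sure $\Is_p(\bR^{p,q})$ is itself connected so that "consistent orientation on its elements" makes sense as a single global choice rather than a choice per component — but even if it were disconnected the argument produces a continuous orientation on each component, which suffices for the statement. I would remark that $\SO^0(p,q)$ acts transitively on $\Is_p(\bR^{p,q})$, giving connectedness, though this is not strictly needed. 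With these points addressed, the Proposition follows, and the companion statement about $\SO^0(p,q)$ preserving versus reversing these orientations follows from the commuting square in the preceding paragraph together with the already-established fact that $A\colon V_+\to V_+'$ preserves orientation if and only if $A\in\SO^0(p,q)$.
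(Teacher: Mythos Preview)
Your proposal is correct and follows essentially the same approach as the paper: the paragraph preceding the Proposition is precisely this argument (contractibility of $X_{p,q}$ gives consistent orientations there, the projection isomorphism transports them to $\Is_p(\bR^{p,q})$, and connectedness plus continuity gives independence of the choice of $V_+$), so the Proposition is stated as a summary rather than given a separate proof. One small wording issue: the sentence ``if it had a kernel, then $L$ would lie in $\pi_+^{-1}(\pi_+(L))$'' is trivially true regardless; the point is that a nontrivial kernel forces $\dim\pi_+(L)<p$, so $\pi_+^{-1}(\pi_+(L))$ has signature $(p',q)$ with $p'<p$ and thus cannot contain a $p$-dimensional isotropic.
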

\begin{proposition}
A transformation $A\in\SO(p,q)$ belongs to the identity component $\SO^0(p,q)$ if and only if it preserves orientations on (elements of) $X_{p,q}$ and $\Is_q(\bR^{p,q})$.
\end{proposition}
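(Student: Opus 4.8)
The plan is to prove the two implications separately, reusing the deformation retraction of $X_{p,q}$ and the commuting square relating the action of $A$ on $X_{p,q}$ and on $\Is_p(\bR^{p,q})$ from the discussion preceding the statement. A preliminary remark lightens the bookkeeping: because the chosen orientations on $X_{p,q}$ (resp.\ on $\Is_p(\bR^{p,q})$) vary continuously and both spaces are connected, any $A\in\SO(p,q)$ either preserves the orientation of \emph{every} element of $X_{p,q}$ or reverses it on every element — the sign of the determinant of $A|_V$ in positive bases is the sign of a continuous nowhere-vanishing function of $V$, hence locally constant. Moreover, the commuting square $\pi_+\circ A=A\circ\pi_+$ together with the fact that the projections $\pi_+$ are orientation-preserving shows that $A$ preserves orientations on $X_{p,q}$ if and only if it does so on $\Is_p(\bR^{p,q})$. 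It therefore suffices to work with $X_{p,q}$ alone.

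For the forward implication, let $A\in\SO^0(p,q)$ and fix $V_0\in X_{p,q}$ with a positive basis $(e_1,\dots,e_p)$. Choose a path $A_t$ in $\SO^0(p,q)$ with $A_0=\mathrm{Id}$ and $A_1=A$. The sign recording whether $A_t\colon V_0\to A_t(V_0)$ is orientation-preserving — the sign of the change-of-basis determinant between $(A_te_1,\dots,A_te_p)$ and a continuously varying positive basis of $A_t(V_0)$ — is continuous and nonzero in $t$, hence constant; it equals $+1$ at $t=0$, so it equals $+1$ at $t=1$. Thus $A$ preserves orientations on $X_{p,q}$, and by the preliminary remark also on $\Is_p(\bR^{p,q})$.

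For the converse, let $A\in\SO(p,q)$ preserve orientations on $X_{p,q}$. Apply the deformation retraction to the point $A(\bR^p\times\{0\})$ to obtain a path in $X_{p,q}$ from $A(\bR^p\times\{0\})$ to $\bR^p\times\{0\}$, and, using that $\SO^0(p,q)$ acts transitively on $X_{p,q}$ with the orbit map a fibre bundle, lift it to a path $B_t\in\SO^0(p,q)$ with $B_0=\mathrm{Id}$ and $B_1\bigl(A(\bR^p\times\{0\})\bigr)=\bR^p\times\{0\}$. Set $A_1:=B_1A$. Then $A_1$ stabilizes $\bR^p\times\{0\}$ and hence its orthogonal complement, so $A_1\in\mathsf{S}(\mathsf{O}(p)\times\mathsf{O}(q))$; since $t\mapsto B_tA$ is a path in $\SO(p,q)$ from $A$ to $A_1$, the two lie in the same connected component of $\SO(p,q)$. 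As $A$ preserves orientations on $X_{p,q}$ and orientations vary continuously along $t\mapsto B_tA$, so does $A_1$; in particular $\det\bigl(A_1|_{\bR^p\times\{0\}}\bigr)>0$, and since $\det A_1=1$ this forces $A_1\in\SO(p)\times\SO(q)$. The latter is connected and contains the identity, so $A_1\in\SO^0(p,q)$ and therefore $A\in\SO^0(p,q)$. Running the same argument with ``reverses'' in place of ``preserves'' shows that an orientation-reversing $A$ cannot lie in $\SO^0(p,q)$, which completes the equivalence.

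I expect no real obstacle here: the one potentially delicate point — that orientation behaviour on $X_{p,q}$ and on $\Is_p(\bR^{p,q})$ coincide — has essentially been settled by the commuting square in the paragraph before the statement, so what remains is routine path-lifting and continuity bookkeeping, using only the contractibility of $X_{p,q}$ and the connectedness of $\SO(p)\times\SO(q)$.
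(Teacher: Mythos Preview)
Your proposal is correct and follows essentially the same route as the paper: deform $A$ along a path in $\SO^0(p,q)$ to an element stabilizing $\bR^p\times\{0\}$, then use connectedness of $\SO(p)\times\SO(q)$ for the orientation-preserving case and continuity for the other, with the commuting square handling $\Is_p(\bR^{p,q})$. The only cosmetic differences are that you spell out the forward implication directly via a path from the identity (the paper argues its contrapositive) and you are more explicit about the fibre-bundle path-lifting.
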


\section{Gromov geodesic flow}  \label{sec:geodesic:flow}

Let $\Gamma$ be a word hyperbolic group and let $\bdry$ be its \emph{Gromov boundary}. The natural action of $\Gamma$ on its boundary has the following north-south dynamics:
\begin{proposition}[{\cite[Proposition 4.2 \& 4.3]{BK}}]
Every element $\gamma\in\Gamma$ of infinite order has exactly two fixed points $\gamma_+,\gamma_-$ in $\bdry$. For any open sets $U,V\subset\bdry$ such that $\gamma_+\in U$, $\gamma_-\in V$, we have $\gamma^n(\bdry - V) \subset U$ for some $n>1$.
\end{proposition}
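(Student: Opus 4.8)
The plan is to realize $\gamma$ as a loxodromic isometry of a (locally finite) Cayley graph $X$ of $\Gamma$ and then run the standard north--south dynamics argument on the compactification $\overline X = X\cup\bdry$, using only two inputs: stability of quasi-geodesics (the Morse lemma) and the fact that the topology on $\bdry$ near a point $\zeta$ is generated by the super-level sets $\{\eta:(\eta\mid\zeta)_{x_0}>R\}$ of the Gromov product based at a fixed vertex $x_0$.

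First I would recall that, since $\Gamma$ is word hyperbolic, an infinite-order element $\gamma$ has positive stable translation length on $X$, so the orbit map $n\mapsto c(n):=\gamma^n x_0$ is a quasi-isometric embedding $\bN$--valued in both directions, i.e.\ a bi-infinite quasi-geodesic; by the Morse lemma this ``quasi-axis'' stays within bounded Hausdorff distance of a genuine geodesic line. Its two ends give points $\gamma_+:=\lim_{n\to+\infty}\gamma^n x_0$ and $\gamma_-:=\lim_{n\to+\infty}\gamma^{-n}x_0$ of $\bdry$; they are distinct (a geodesic line has two distinct ideal endpoints) and both are fixed by $\gamma$, since $\gamma\cdot\gamma_\pm=\lim\gamma^{\pm n+1}x_0=\gamma_\pm$.

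The core is the dynamical statement, which I would prove in the form: for every neighbourhood $U$ of $\gamma_+$ there is a neighbourhood $V$ of $\gamma_-$ and an $N\in\bN$ with $\gamma^n(\bdry\setminus V)\subset U$ for all $n\geq N$ (this is what the conclusion $\gamma^n(\bdry-V)\subset U$ means). By the neighbourhood-basis property it suffices to show that $(\gamma^n\xi\mid\gamma_+)_{x_0}\to\infty$ as $n\to\infty$, \emph{uniformly} over $\xi$ in the compact set $\bdry\setminus V$, where $V$ is taken of the form $\{\eta:(\eta\mid\gamma_-)_{x_0}>R\}$. Using that $\gamma_+$ is fixed together with $\gamma$-equivariance of the Gromov product, $(\gamma^n\xi\mid\gamma_+)_{x_0}=(\xi\mid\gamma_+)_{\gamma^{-n}x_0}=(\xi\mid\gamma_+)_{c(-n)}$, so the point is that, as the basepoint $c(-n)$ recedes toward $\gamma_-$ along the quasi-axis, the rays $[c(-n),\gamma_+)$ and $[c(-n),\xi)$ fellow-travel for a time growing linearly in $n$. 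Indeed $[c(-n),\gamma_+)$ stays uniformly close to the quasi-axis on all of $[c(-n),c(+\infty))$ by the Morse lemma; on the other hand the bound $(\xi\mid\gamma_-)_{x_0}\leq R$ forces the ray $[x_0,\xi)$ to leave a fixed tubular neighbourhood of the $\gamma_-$-side of the quasi-axis within a distance $m_0=m_0(R)$ (otherwise it would fellow-travel $[x_0,\gamma_-)$ too long). Feeding this into the thin-triangle inequality for the triangle $c(-n),x_0,\xi$ shows that $[c(-n),\xi)$ must itself track the quasi-axis back from $c(-n)$ past $c(-m_0)$, hence for a distance $\gtrsim\ell(\gamma)(n-m_0)-O(\delta)$. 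Two rays issuing from $c(-n)$ that both pass within a bounded distance of the common point $c(-m_0)$ have Gromov product at $c(-n)$ at least $d(c(-n),c(-m_0))-O(\delta)$, so $(\xi\mid\gamma_+)_{c(-n)}\to\infty$ uniformly in $\xi\in\bdry\setminus V$, as desired. Finally, ``exactly two fixed points'': if $\xi\in\bdry$ were fixed with $\xi\notin\{\gamma_+,\gamma_-\}$, pick a neighbourhood $V$ of $\gamma_-$ with $\xi\notin V$ and a neighbourhood $U$ of $\gamma_+$ with $\xi\notin U$; the dynamics just proved give $\gamma^n\xi\in U$ for large $n$, while $\gamma^n\xi=\xi\notin U$, a contradiction.

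The main obstacle is the uniformity in $\xi$: pointwise convergence $\gamma^n\xi\to\gamma_+$ is soft, but the uniform version forces one to match neighbourhoods in $\bdry$ precisely with super-level sets of the Gromov product and to bound the divergence distance $m_0$ of $[x_0,\xi)$ from the quasi-axis purely in terms of $R$ — this is where compactness of $\bdry$ and the Morse lemma (with constants depending only on $\delta$ and the quasi-isometry constants of the axis) carry the argument. A secondary point worth isolating is the input that $\langle\gamma\rangle$ is undistorted, i.e.\ $\ell(\gamma)>0$; this is exactly where hyperbolicity of $\Gamma$ is used, and everything afterwards is a statement about loxodromic isometries of an arbitrary proper $\delta$-hyperbolic space.
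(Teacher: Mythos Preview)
The paper does not supply its own proof of this proposition; it is quoted from \cite{BK} as background. So there is nothing to compare your argument against in the paper itself --- you are reconstructing the standard proof of north--south dynamics for infinite-order elements of a hyperbolic group, and your outline is essentially the correct one.

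Two small points are worth tightening. First, your initial phrasing (``for every neighbourhood $U$ of $\gamma_+$ there is a neighbourhood $V$ of $\gamma_-$\ldots'') is weaker than the stated conclusion, where $V$ is \emph{given} along with $U$. Your actual argument does prove the stronger version, since you show uniform convergence on the complement of any basic Gromov-product neighbourhood $\{\eta:(\eta\mid\gamma_-)_{x_0}>R\}$, and every open $V\ni\gamma_-$ contains such a set; but you should state it that way from the start. Second, the step ``the bound $(\xi\mid\gamma_-)_{x_0}\leq R$ forces the ray $[x_0,\xi)$ to leave a fixed tubular neighbourhood of the $\gamma_-$-side of the quasi-axis within a distance $m_0(R)$'' is the heart of the uniformity and deserves one more line: it follows directly from the approximate identity $(\xi\mid\gamma_-)_{x_0}\approx$ (fellow-travelling time of $[x_0,\xi)$ and $[x_0,\gamma_-)$), which is the defining property of the Gromov product in a $\delta$-hyperbolic space. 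With those two clarifications the argument is complete and is exactly the approach one finds in the cited reference.
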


The action of $\Gamma$ on $\bdry$ extends to a diagonal action of $\Gamma$ on
\begin{align*}
\partial_\infty\Gamma^{(2)} := \partial_\infty\Gamma\times\partial_\infty\Gamma\setminus\{(x,x) \mid x\in \partial_\infty\Gamma\}.
\end{align*}
We denote $\partial_\infty\Gamma^{(2)}\times\mathbb{R}$ by $\cflow$ and for all $(x,y)\in\partial_\infty\Gamma^{(2)}$ and $s,t\in\mathbb{R}$ let
\begin{align*}
    \phi_t: \cflow&\rightarrow \cflow\\
    (x,y,s)&\mapsto (x,y,s+t).
\end{align*}
Gromov \cite{Gromov} showed that there exists a proper cocompact action of $\Gamma$ on $\cflow$ which commutes with the flow $\{\phi_t\}_{t\in\mathbb{R}}$ and the restriction of this action on $\partial_\infty\Gamma^{(2)}$ is the diagonal action. Moreover, there exists a metric on $\cflow$ well defined up to H\"older  equivalence such that the $\Gamma$ action is isometric, the flow $\phi_t$ acts by Lipschitz homeomorphisms and every orbit of the flow $\{\phi_t\}_{t\in\mathbb{R}}$ gives a quasi-isometric embedding. More precisely, the visual metric on $\partial_\infty\Gamma$ is well-defined up to H\"older equivalence (\cite[Theorem 2.18]{BK}), inducing the product metric on $\partial_\infty\Gamma^{(2)}\times\mathbb{R}$ up to H\"older equivalence. Gromov showed in \cite[Corollary 8.3H]{Gromov} that there is a metric which is bi-Lipschitz to the product metric and satisfies the properties above.\\
The flow $\phi_t$ on $\cflow$ gives rise to a flow $\phi_t$, the \emph{Gromov geodesic flow}, on the quotient 
\[\flow := \Gamma\backslash\left(\partial_\infty\Gamma^{(2)} \times \mathbb{R}\right)\] 
which we call the \emph{flow space} of $\Gamma$.

More details about this construction can be found in Champetier \cite{Champetier} and Mineyev \cite{Mineyev}. In particular, the flow space has the following properties which will be important to us later:
\begin{proposition}[{\cite[Theorem 60]{Mineyev}}] \
\begin{enumerate}
 \item The flow space $\flow$ is a proper metric space.
 \item To every element $\gamma\in\Gamma$ of infinite order, we associate its \emph{translation length}
  \[ l(\gamma) = \lim_{n\to\infty} \frac{d(\gamma^nx,x)}{n}, \]
  where $x\in\flow$ is any point. Then we have
  \[ l(\gamma) = \inf_{y\in\flow}(d(y,\gamma y)) \]
  and this infimum is realized on the \emph{axis} $\{ (\gamma_-,\gamma_+,t), t\in\bR \}$.
\end{enumerate}
\end{proposition}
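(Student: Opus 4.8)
The plan is to treat the two statements separately; both reduce to formalities once one invokes Gromov's theorem on the geodesic flow together with the subadditivity of displacement functions.

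\textbf{Part (1).} By Gromov's theorem the action of $\Gamma$ on $\cflow$ is proper, cocompact and isometric. I would first recall the general fact that the orbit space of a proper isometric action of a discrete group on a metric space carries a genuine metric, namely $d_{\flow}\big([x],[y]\big) := \inf_{\gamma\in\Gamma} d_{\cflow}(x,\gamma y)$: properness forces orbits to be closed and locally uniformly separated, so this infimum is strictly positive (and attained) whenever $[x]\neq[y]$, and the resulting metric induces the quotient topology. Cocompactness then gives that $\flow$ is the continuous image of a compact fundamental set, hence compact; and a compact metric space is proper, since closed balls are closed subsets of a compact space. (Alternatively one checks directly that a closed ball in $\flow$ pulls back to a subset of finitely many $\Gamma$-translates of a compact set and is therefore compact.)

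\textbf{Part (2), existence of $l(\gamma)$.} Here, as the explicit description of the axis makes clear, $d$ denotes the metric on $\cflow$ and $\gamma$ acts through Gromov's action. Fix a point $x\in\cflow$ and set $a_n := d(\gamma^n x, x)$. Since $\gamma^n$ is an isometry,
\[ a_{m+n} \le d(\gamma^{m+n}x,\gamma^{n}x) + d(\gamma^{n}x,x) = d(\gamma^{m}x,x) + d(\gamma^{n}x,x) = a_m + a_n, \]
so $(a_n)_{n\in\bN}$ is subadditive and $a_n/n$ converges by Fekete's subadditivity lemma; this is the value $l(\gamma)$. Two applications of the triangle inequality give $\lvert a_n(x) - a_n(y)\rvert \le 2\,d(x,y)$ for any two points $x,y$, so the limit is independent of the chosen point; running the same estimate with $\gamma$ replaced by a conjugate shows in addition that $l$ is a conjugacy invariant.

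\textbf{Part (2), the infimum and the axis.} For $l(\gamma)\le\inf_{y}d(y,\gamma y)$, telescope: for any $y$ one has $d(\gamma^n y, y) \le \sum_{k=0}^{n-1} d(\gamma^{k+1}y,\gamma^{k}y) = n\,d(\gamma y, y)$, so dividing by $n$ and letting $n\to\infty$ yields $l(\gamma)\le d(y,\gamma y)$. For the reverse inequality and the realization on the axis $A=\{(\gamma_-,\gamma_+,t):t\in\bR\}$, I would use that $\gamma$ fixes both endpoints $\gamma_\pm\in\bdry$ by the north--south dynamics, so it preserves $A$ setwise; since the $\Gamma$-action commutes with the flow $\phi_t$, its restriction to $A$ must have the form $(\gamma_-,\gamma_+,s)\mapsto(\gamma_-,\gamma_+,s+\tau)$ for a single constant $\tau=\tau(\gamma)$, i.e. $\gamma$ acts on its axis as $\phi_\tau$. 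Because every flow orbit is a quasi-isometric (for the product-type metric, isometric) embedding of $\bR$, this forces $d(\gamma^n z, z) = \lvert n\tau\rvert$ for $z\in A$, hence $l(\gamma)=\lvert\tau\rvert = d(z,\gamma z)$ for every $z\in A$; combined with the previous inequality this gives $l(\gamma)=\inf_{y}d(y,\gamma y)$ with the infimum attained exactly on $A$. \emph{The main obstacle} is precisely this last point: that $\gamma$ moves its axis by an \emph{exact} translation of length $l(\gamma)$ and that nothing off the axis is moved less. This uses the fine behaviour of the Gromov--Mineyev metric along flow lines (orbits are $1$-quasi-geodesics at unit speed, so sliding along an orbit realizes distance); working only up to Hölder or bi-Lipschitz equivalence one still gets positivity $l(\gamma)>0$ from the quasi-isometric-embedding property and north--south dynamics, but the clean equality $l(\gamma)=\inf_{y}d(y,\gamma y)$, as opposed to a two-sided estimate, genuinely requires the good metric.
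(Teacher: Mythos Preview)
The paper does not prove this proposition; it is simply quoted from \cite[Theorem 60]{Mineyev} without argument, so there is no ``paper's own proof'' to compare against. What you have written is therefore not a comparison target but an independent sketch of why Mineyev's statement is plausible.

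Your outline is sound. Part (1) is correct and in fact stronger than needed: cocompactness makes $\flow$ compact, hence proper. In Part (2), the subadditivity argument for the existence of $l(\gamma)$ and the telescoping bound $l(\gamma)\le d(y,\gamma y)$ are standard and fine. The observation that $\gamma$ preserves its axis and, because the $\Gamma$-action commutes with $\phi$, must act there as $\phi_\tau$ for some fixed $\tau$ is also correct; combined with properness (which forces $\tau\neq 0$ for infinite-order $\gamma$) this already gives positivity of $l(\gamma)$.

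You have correctly located the genuine content: the equality $d(\gamma^n z,z)=\lvert n\tau\rvert$ for $z$ on the axis requires that flow orbits be unit-speed geodesics for the chosen metric, not merely quasi-geodesics. The paper's preliminary discussion only records that orbits are quasi-isometric embeddings and that the metric is bi-Lipschitz to the product metric, which by itself would only yield a two-sided estimate. The exact identity $l(\gamma)=\inf_y d(y,\gamma y)$, attained on the axis, is precisely one of the refined properties Mineyev builds into his metric in \cite[Theorem 60]{Mineyev}; your sketch reduces correctly to that input but cannot replace it.
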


We will also need the following result, which follows from the proof of Lemma 1.3 of \cite{GLM}, using Proposition 4.2 and Theorem 4.3 of \cite{BK}. We give the proof here for the reader's convenience.
\begin{lemma}   \label{Lem:Flowspace_connected}
The space $\flow$ is connected.
\begin{proof}
By Proposition 4.2 (1) and (3) in \cite{BK}, every infinite order element $\gamma\in\Gamma$ has exactly two fixed points $\gamma^\pm \in \partial\Gamma$, and the set
\[ \{\gamma^- \mid \gamma\in\Gamma \ \text{of infinite order} \ \} \subset \partial\Gamma \]
is dense. Fix one such element $\gamma$ and consider the set
\[ U = \Gamma \ \backslash \ \{ (\gamma^-,y,t) \mid y\neq \gamma^-, t\in\bR \} \subset \flow. \]
We will show that it is connected. Assume that $W_1,W_2\subset \flow$ are open sets such that $U = (W_1\cap U) \sqcup (W_2 \cap U)$, and that $W_1$ contains a point of $[ \{ (\gamma^-,\gamma^+,t), t\in\bR \} ]$. Denoting by $\widetilde{W}_i$ the lifts to $\cflow$, we see that 
\[\overrightarrow{\gamma^- \gamma^+}:=[\{ (\gamma^-,\gamma^+,t), t\in\bR \}]\] has to be contained in $\widetilde{W}_1$ since it is connected. Now for any $\gamma^-\neq y\in\partial\Gamma$, consider the set $\overrightarrow{\gamma^- y}$. We have
\[ \lim\limits_{n\to\infty} \gamma^n \cdot \overrightarrow{\gamma^- y} = \lim\limits_{n\to\infty} \overrightarrow{\gamma^- (\gamma^n y)} = \overrightarrow{\gamma^- \gamma^+}, \]
so by openness of $\widetilde{W}_1$, the orbit $\Gamma \cdot \overrightarrow{\gamma^- y}$ has to be contained in $\widetilde{W}_1$. Therefore, $U$ is entirely contained in $W_1$.\\
By Proposition 4.2 (2) in \cite{BK}, the orbit $\Gamma\cdot \gamma^-$ is dense in $\partial\Gamma$, so $U$ is a dense connected subset of $\flow$, which is thus connected as well.
\end{proof}
\end{lemma}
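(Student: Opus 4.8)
\noindent\emph{Proof proposal.}\ The plan is to exploit the fact that, although the space $\cflow=\partial_\infty\Gamma^{(2)}\times\bR$ is typically highly disconnected (the boundary $\bdry$ can be a Cantor set), passing to the $\Gamma$-quotient glues the pieces together. Throughout I would work with the quotient projection $q\colon\cflow\to\flow$, which is continuous and surjective — hence sends connected sets to connected sets and dense sets to dense sets — and I would rely on three elementary topological facts: the continuous image of a connected set is connected; any set squeezed between a connected set and its closure is connected; and a union of connected sets with a common point is connected. The aim is to exhibit a single subset of $\flow$ that is at once connected and dense; its closure is then all of $\flow$ and is connected.

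First I would fix an infinite-order element $\gamma\in\Gamma$ with fixed points $\gamma_\pm\in\bdry$ and set $\xi:=\gamma_-$. For every $y\in\bdry\setminus\{\xi\}$ write $\overrightarrow{\xi y}:=\{\xi\}\times\{y\}\times\bR\subseteq\cflow$; this flow line is connected, so $q(\overrightarrow{\xi y})$ is connected. The decisive input is the north-south dynamics of the boundary action recalled above: since $\gamma\xi=\xi$, one has $\gamma^n\cdot\overrightarrow{\xi y}=\overrightarrow{\xi\,(\gamma^n y)}$ as subsets of $\cflow$, while $\gamma^n y\to\gamma_+$. Fixing the last coordinate $t$ and letting $n\to\infty$ gives $(\xi,\gamma^n y,t)\to(\xi,\gamma_+,t)$, and since $q$ is $\Gamma$-invariant we have $q(\overrightarrow{\xi y})=q(\overrightarrow{\xi\,(\gamma^n y)})$; hence every point of $Z:=q(\overrightarrow{\xi\gamma_+})$ — the image of the axis of $\gamma$ — lies in $\overline{q(\overrightarrow{\xi y})}$. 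Consequently $q(\overrightarrow{\xi y})\subseteq q(\overrightarrow{\xi y})\cup Z\subseteq\overline{q(\overrightarrow{\xi y})}$, so $q(\overrightarrow{\xi y})\cup Z$ is connected for each $y\neq\xi$; as all of these sets contain the connected set $Z$, their union
\[ U:=q\big(\{\xi\}\times(\bdry\setminus\{\xi\})\times\bR\big)=\bigcup_{y\neq\xi}q(\overrightarrow{\xi y}) \]
is connected.

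It then remains to see that $U$ is dense in $\flow$, and for this I would use minimality of the $\Gamma$-action on $\bdry$, i.e.\ density of the orbit $\Gamma\cdot\xi$: given $(a,b,t)\in\cflow$, choose $g_n\in\Gamma$ with $g_n\xi\to a$; then $g_n\xi\neq b$ for $n$ large, so $(g_n\xi,b,t)$ belongs to $\Gamma\cdot(\{\xi\}\times(\bdry\setminus\{\xi\})\times\bR)$ and converges to $(a,b,t)$. Thus $\Gamma\cdot(\{\xi\}\times(\bdry\setminus\{\xi\})\times\bR)$ is dense in $\cflow$, and since $q$ is a continuous surjection its image $U$ is dense in $\flow$. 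Therefore $\overline U=\flow$ is connected.

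The main obstacle is conceptual rather than computational: $\cflow$ carries no natural connected subset to start from, so connectedness of $\flow$ has to be manufactured out of the group dynamics. The technical crux is the limiting statement $\gamma^n\cdot\overrightarrow{\xi y}\to\overrightarrow{\xi\gamma_+}$ together with the $\Gamma$-invariance of $q$ — this is precisely what lets every flow line in the chosen slice $\{\xi\}\times(\bdry\setminus\{\xi\})\times\bR$ be connected, after projecting to $\flow$, to the single ``hub'' formed by the image of the axis of $\gamma$; the rest is routine packaging via the three topological facts above and the density of a single boundary orbit. (Implicitly one uses here that $\Gamma$ is non-elementary, so that $\Gamma\cdot\xi$ is indeed dense; for elementary $\Gamma$ the statement can fail.)
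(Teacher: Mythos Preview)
Your proof is correct and follows essentially the same approach as the paper's: both fix an infinite-order element $\gamma$, consider the image in $\flow$ of the slice $\{\gamma_-\}\times(\bdry\setminus\{\gamma_-\})\times\bR$, use the north--south dynamics $\gamma^n y\to\gamma_+$ together with $\Gamma$-invariance of the projection to connect every flow line in this slice to the axis of $\gamma$, and then invoke density of the orbit $\Gamma\cdot\gamma_-$ to conclude. The only difference is cosmetic packaging---the paper phrases the connectedness step as a separation argument in open sets $W_1,W_2$, while you give the equivalent direct ``hub-and-spoke'' construction via closures; the underlying idea is identical.
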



\section{Anosov representations}

In this section, we recall the general definition of an Anosov representation and explain how to obtain a modified contraction/expansion property in our setting that we will need later on. The setup used here is very close to the one in \cite{GW1} and \cite{GW2}, which in turn is a generalization of the original definition in \cite{Labourie}. It should be noted that by now, equivalent definitions avoiding the geodesic flow (which is rather involved when considering general word-hyperbolic groups) have been given in \cite{KLP} and \cite{GGKW}. They are less suited for our purposes, however.


Let $G$ be a semisimple Lie group, $\Gamma$ be a word hyperbolic group and $\varrho:\Gamma\rightarrow G$ be an injective homomorphism. Furthermore, let $(P^+,P^-)$ be a pair of opposite parabolic subgroups of $G$ and
\[ \mathcal{X} \subset G/P^+ \times G/P^- \] 
the unique open $G$-orbit.

Next, we need the geodesic flow. We will use the flow space $\flow$ together with the flow $\phi_t$ introduced in the previous section. It induces a flow $\phi_t$ on the trivial bundle $\cflow \times \mathcal{X}$ by acting as the identity on fibers. This flow then descends to a flow $\phi_t$ on the bundle
\[ \mathsf{P}_\varrho = \Gamma \ \backslash \left( \cflow \times \mathcal{X} \right) \]
over $\flow$, where $\Gamma$ acts on $\flow$ as described in the previous section and via $\varrho$ on $\mathcal{X}$. 
The product structure of $\mathcal{X}$ implies that it comes equipped with two distributions $X^+$ and $X^-$, where $(X^+)_{(gP^+,gP^-)} := \mathsf{T}_{gP^+}G/P^+$, and $(X^-)_{(gP^+,gP^-)} := \mathsf{T}_{gP^-}G/P^-$. Since these distributions are $G$-invariant, they are in particular $\Gamma$-invariant and we can interpret them as vector bundles over $\mathsf{P}_\varrho$, which we will also denote by $X^+$ and $X^-$. The flow $\phi_t$ preserves the product structure of $\mathcal{X}$ as well, so it induces a flow on these vector bundles (using the derivative of $\phi_t$ in fiber directions).\\
Now we are ready to state the definition of an Anosov representation.

\begin{definition}  \label{Def:Anosov} 
A representation $\varrho:\Gamma\to G$ is $(P^+,P^-)$-Anosov if the bundle $\mathsf{P}_\varrho$ admits an Anosov section $\sigma$, i.e. a section $\sigma: \flow \to \mathsf{P}_\varrho$ such that
\begin{itemize}
    \item $\sigma$ is parallel (or locally constant) along flow lines of the geodesic flow, with respect to the locally flat structure on $\mathsf{P}_\varrho$
    \item The flow $\phi_t$ is contracting on the bundle $\sigma^*X^+$ and dilating on the bundle $\sigma^*X^-$.
\end{itemize}
\end{definition}
\begin{remark}
\begin{enumerate}[(i)]
\item The contraction/dilation condition in the definition means the following: Pick any continuous norm $(\| \cdot \|_v)_{v\in\flow}$ on the bundles $\sigma^*X^+$ and $\sigma^*X^-$. Then there exist constants $c,C > 0$ such that, for any $w\in\flow$ and $x\in(\sigma^*X^+)_w$, we have
\[ \|\phi_t(x)\|_{\phi_t(w)} < C\exp(-ct)\|x\|_w \]
for all $t>0$, and similarly for any $y\in(\sigma^*X^-)_w$,
\[ \|\phi_{-t}(y)\|_{\phi_{-t}(w)} < C\exp(-ct)\|y\|_w. \]
By compactness of the base, the choice of norm does not matter.
\item It is sometimes easier in terms of notation to lift $\sigma$ to a $\Gamma$-equivariant section of the trivial bundle $\cflow \times \mathcal{X}$. We will write
\[ \widetilde{\sigma}: \cflow \to \mathcal{X} \]
for the $\Gamma$-equivariant map defining this section. It is constant along flow lines.
\end{enumerate}
\end{remark}


We now turn to the case $G=\SO^0(n+1,n), P^+ = \mathrm{Stab}_G(E)$ for some maximal isotropic subspace $E \in \Is_n(\bR^{n+1,n})$. Then $P^+$ is conjugate to its opposite parabolic $P^-$ and the unique open $G$-orbit $\mathcal{X}$ is identified with the space of transverse pairs $(E,F) \in (\Is_n(\bR^{n+1,n}))^2$. Transversality is equivalent to having a direct sum splitting $\bR^{n+1,n} = E \oplus F^\perp$ in this case.    Our goal for the remainder of this section will be to prove a contraction property that is slightly different from the one in Definition \ref{Def:Anosov}.\\
As we saw before, the homogeneous space $\mathcal{X}$ identifies with the space of transverse pairs of maximal isotropics. We will start by giving a more explicit description of the bundles $\sigma^*X^+$ and $\sigma^*X^-$. For any $(V^+,V^-)\in\mathcal{X}$, a chart for $G/P^+ = \Is_n(\bR^{n+1,n})$ containing the point $V^+$ is given by
\begin{equation} \label{Eq:chart_linear} \{ f \in \mathrm{Hom}(V^+,(V^-)^\perp) \ | \ \forall v,w \in V^+: b(v+f(v)\mid w+f(w)) = 0 \}, \end{equation}
where $b$ denotes the symmetric bilinear form of signature $(n+1,n)$. Therefore, the subspace defined by the first distribution, 
\[(X^+)_{(V^+,V^-)} = \mathsf{T}_{V^+} \Is_n(\bR^{n+1,n}),\] is given by
\[ \{ g \in \mathrm{Hom}(V^+,(V^-)^\perp) \ | \ \forall v,w \in V^+: b(v\mid g(w)) + b(g(v)\mid w) = 0 \}. \]
The section $\sigma$ now allows us to convert this pointwise description into a description of the associated bundle
\[ \mathsf{R}_\varrho = \Gamma \ \backslash \left( \cflow \times \bR^{n+1,n} \right). \]
More precisely, $\widetilde{\sigma}$ defines a $\Gamma$-invariant splitting
\begin{equation}    \label{Eq:splitting_ASec_equivar}
\cflow \times \bR^{n+1,n} = \mathcal{V}^+ \oplus \mathcal{L} \oplus \mathcal{V}^-
\end{equation} 
by choosing, for $\widetilde{\sigma}(v) = (V^+,V^-)$,
\[ \mathcal{V}^+_v = V^+, \quad \mathcal{L}_v = (V^+)^\perp \cap (V^-)^\perp, \quad \mathcal{V}^-_v = V^-. \]
Here, orthogonal complements are taken with respect to the bilinear form $b$. The flow action extends to this (trivial) bundle as well by acting trivially on the fiber component. We remark that $b$ is preserved by the flow, which will be useful later on. The flow, the bilinear form and the splitting then descend to give a flow-invariant splitting of $\mathsf{R}_\varrho$, which we will denote by
\[ \mathsf{R}_\varrho = \mathsf{V}^+ \oplus \mathsf{L} \oplus \mathsf{V}^-. \]
The bundle $\sigma^*X^+$ is now identified with the bundle
\[ \mathrm{Hom}_{b-\mathrm{skew}}(\mathsf{V}^+,\mathsf{L}\oplus\mathsf{V}^-) = \mathrm{Hom}(\mathsf{V}^+,\mathsf{L}) \oplus \mathrm{Hom}_{b-\mathrm{skew}}(\mathsf{V}^+,\mathsf{V}^-), \]
where a transformation $g$ is called ${b-\mathrm{skew}}$ iff $b(\cdot\mid g(\cdot))= -b(g(\cdot)\mid \cdot)$. Similarly,
\[ \sigma^*X^- = \mathrm{Hom}(\mathsf{V}^-,\mathsf{L}) \oplus \mathrm{Hom}_{b-\mathrm{skew}}(\mathsf{V^-},\mathsf{V^+}). \]
The flow $\phi_t$ acts on an element $\psi\in\sigma^*X^\pm$ by
\[ (\phi_t\psi)(x) = \phi_t(\psi(\phi_{-t}x)), \]
and the Anosov property tells us that this action is contracting on $\sigma^*X^+$ and dilating on $\sigma^*X^-$. Since this holds true for any choice of norm, let us first pick an auxiliary positive definite quadratic form $e$ on $\mathsf{R}_\varrho$ such that the splitting above is orthogonal and $e$ agrees with $b$ on $\mathsf{L}$ (this is possible since the fibers of $\mathsf{L}$ are spacelike for $b$). The induced operator norms are our norms of choice for $\sigma^*X^+$ and $\sigma^*X^-$.\\
After this somewhat lengthy setup, we are finally ready to conclude. All norms in the following statements are induced by $e$.
\begin{lemma}
Let $p \in \flow$ and $0\neq v \in \mathsf{V}^+_p$ be arbitrary. Then there exists $\psi \in \mathrm{Hom}(\mathsf{V}^+_p,\mathsf{L}_p)$ such that $0 \neq \psi(v) = l \in \mathsf{L}_p$ and $\| \psi \| = \frac {\| l \|} { \| v \| }$. Analogously, for $w \in \mathsf{V}^-_p$, we find $\varphi \in \mathrm{Hom}(\mathsf{V}^-_p,\mathsf{L}_p)$ such that $\| \varphi \| = \frac {\| l \|} { \| w \| }$.
\begin{proof}
Complete $v$ to an $e$-orthogonal basis of $\mathsf{V}^+_p$, map $v$ to $0\neq l \in \mathsf{L}_p$ and map all other basis vectors to $0$.
\end{proof}
\end{lemma}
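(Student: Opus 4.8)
The proof will be a short linear-algebra argument relying only on the properties of the auxiliary metric $e$ fixed above. Recall that by construction the splitting $\mathsf{R}_\varrho = \mathsf{V}^+ \oplus \mathsf{L} \oplus \mathsf{V}^-$ is $e$-orthogonal, $e$ is positive definite on each summand, and the norm on $\mathrm{Hom}(\mathsf{V}^+_p,\mathsf{L}_p)$ used in the statement is the operator norm induced by the restrictions $e|_{\mathsf{V}^+_p}$ and $e|_{\mathsf{L}_p}$. Observe also that $\mathsf{L}_p \neq 0$ (in fact it is a line: for a transverse pair $(V^+,V^-)$ one has $(V^+)^\perp + (V^-)^\perp = (V^+\cap V^-)^\perp = \bR^{n+1,n}$, whence $\dim\bigl((V^+)^\perp \cap (V^-)^\perp\bigr) = 1$), so a nonzero $l \in \mathsf{L}_p$ is always available; and $v$ is implicitly assumed nonzero, since $\|v\|$ occurs in a denominator and $\psi(v) = l \neq 0$ forces $v \neq 0$.

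The construction I would use is as follows. Fix a nonzero $l \in \mathsf{L}_p$ and extend $v$ to an $e$-orthogonal basis $(v, e_2, \dots, e_n)$ of $\mathsf{V}^+_p$; this is possible because $e|_{\mathsf{V}^+_p}$ is positive definite. Define $\psi \in \mathrm{Hom}(\mathsf{V}^+_p, \mathsf{L}_p)$ by $\psi(v) = l$ and $\psi(e_i) = 0$ for $2 \le i \le n$. Then $\psi(v) = l \neq 0$ by definition, and it remains only to compute the operator norm. For a unit vector $u \in \mathsf{V}^+_p$, write $u = a_1 v/\|v\| + \sum_{i=2}^n a_i e_i/\|e_i\|$ with $\sum_i a_i^2 = 1$; by $e$-orthogonality and linearity $\psi(u) = (a_1/\|v\|)\,l$, so $\|\psi(u)\| = |a_1|\,\|l\|/\|v\| \le \|l\|/\|v\|$, with equality precisely when $u = \pm v/\|v\|$. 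Hence $\|\psi\| = \|l\|/\|v\|$. The claim about $\varphi \in \mathrm{Hom}(\mathsf{V}^-_p, \mathsf{L}_p)$ follows verbatim, with $\mathsf{V}^-_p$ and $w$ in the roles of $\mathsf{V}^+_p$ and $v$.

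There is essentially no obstacle in this argument; the only thing that requires a moment of care is unwinding what ``operator norm'' means for elements of the $\mathrm{Hom}(\mathsf{V}^+,\mathsf{L})$-summand of $\sigma^*X^+$, so that the $e$-orthogonality of the splitting $\mathsf{V}^+ \oplus \mathsf{L} \oplus \mathsf{V}^-$ can be exploited in the computation above. One could also rescale $l$ at the outset so that $\|l\| = \|v\|$, thereby reducing to producing a norm-one map, although this only hides the (trivial) content.
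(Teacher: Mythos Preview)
Your argument is correct and is exactly the approach taken in the paper: complete $v$ to an $e$-orthogonal basis of $\mathsf{V}^+_p$, send $v$ to a chosen nonzero $l\in\mathsf{L}_p$, and kill the other basis vectors. Your additional remarks (nonvanishing of $\mathsf{L}_p$, the implicit $v\neq 0$, and the explicit operator-norm computation) only make the paper's one-line proof more explicit.
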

\begin{corollary}   \label{Cor:Anosov_contraction}
The bundle $\mathsf{V}^+$ is dilated by the flow $\phi_t$. The bundle $\mathsf{V}^-$ is contracted by the flow $\phi_t$.
\begin{proof}
Let $p \in \flow$ and $v\in \mathsf{V}^+_p$ be arbitrary. We saw earlier that
\[ (\sigma^*X^+)_p = \mathrm{Hom}(\mathsf{V}^+_p,\mathsf{L}_p) \oplus \mathrm{Hom}_{b-\mathrm{skew}}(\mathsf{V}^+_p,\mathsf{V}^-_p). \]
Using the previous lemma, we can therefore pick $\psi \in (\sigma^*X^+)_p$ such that $\| \psi \| = \frac {\| l \|}{\| v \|}$ for some $0\neq l \in \mathsf{L}_p$ (by picking it in the first summand). Then we have 
\[ \frac{\| l \|}{\| \phi_t(v) \|} = \frac{\| \phi_t(l) \|}{\| \phi_t(v) \|} \leq \| \phi_t(\psi) \| < C\exp(-ct)\| \psi \| = C\exp(-ct) \frac {\| l \|}{\| v \|}, \]
where we used the fact that $b$ is preserved by the flow and agrees with $e$ on $\mathsf{L}$ to get the first equality.\\
The proof for $\mathsf{V}^-$ follows in the same way.
\end{proof}
\end{corollary}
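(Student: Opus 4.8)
The plan is to leverage the Anosov contraction on $\sigma^*X^+$ that is already available, together with the explicit decomposition
\[ (\sigma^*X^+)_p = \mathrm{Hom}(\mathsf{V}^+_p,\mathsf{L}_p) \oplus \mathrm{Hom}_{b-\mathrm{skew}}(\mathsf{V}^+_p,\mathsf{V}^-_p) \]
and the fact that the flow preserves $b$, which by construction agrees with the chosen norm $e$ on $\mathsf{L}$. The point is that homomorphisms into $\mathsf{L}$ detect the behaviour of $\mathsf{V}^+$ directly: since the target $\mathsf{L}$ carries a flow-invariant norm, any dilation of $\mathsf{V}^+$ must be compensated by a contraction of such homomorphisms, and conversely.

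Concretely, I would fix $p\in\flow$ and a nonzero $v\in\mathsf{V}^+_p$. Using the preceding Lemma, choose $\psi\in\mathrm{Hom}(\mathsf{V}^+_p,\mathsf{L}_p)\subseteq(\sigma^*X^+)_p$ with $\psi(v)=l\neq 0$ and operator norm $\|\psi\|=\|l\|/\|v\|$. Pushing forward by the flow, $\phi_t\psi$ lies in $\mathrm{Hom}(\mathsf{V}^+_{\phi_tp},\mathsf{L}_{\phi_tp})$ and satisfies $(\phi_t\psi)(\phi_tv)=\phi_t(l)$, so
\[ \|\phi_t\psi\| \;\geq\; \frac{\|\phi_t(l)\|}{\|\phi_tv\|} \;=\; \frac{\|l\|}{\|\phi_tv\|}, \]
where the last equality uses that $b$ is flow-invariant and agrees with $e$ on $\mathsf{L}$, hence $\|\phi_t(l)\|=\|l\|$. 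On the other hand, the Anosov property supplies constants $c,C>0$ with $\|\phi_t\psi\|<C\exp(-ct)\|\psi\|=C\exp(-ct)\,\|l\|/\|v\|$ for all $t>0$. Combining the two inequalities and cancelling $\|l\|$ gives $\|\phi_tv\|>C^{-1}\exp(ct)\|v\|$, i.e.\ $\mathsf{V}^+$ is dilated.

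For $\mathsf{V}^-$ the argument is entirely symmetric: one uses that $\sigma^*X^-$ contains the summand $\mathrm{Hom}(\mathsf{V}^-,\mathsf{L})$ and that $\phi_t$ is dilating on $\sigma^*X^-$ --- equivalently $\phi_{-t}$ is contracting --- and reruns the same estimate with $\phi_{-t}$ in place of $\phi_t$. This yields $\|\phi_{-t}w\|>C^{-1}\exp(ct)\|w\|$ for $w\in\mathsf{V}^-_p$, which is exactly the assertion that $\mathsf{V}^-$ is contracted by the forward flow.

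The genuinely delicate point --- the step I expect to be the main obstacle --- is the norm bookkeeping underlying the Lemma: one must ensure that the constructed $\psi$ has operator norm exactly $\|l\|/\|v\|$ (which is why completing $v$ to an $e$-orthogonal basis and killing the remaining basis vectors is essential), and that the norm on the target $\mathsf{L}$ is truly flow-invariant. The latter is the reason the auxiliary positive definite form $e$ was chosen earlier so that $e|_{\mathsf{L}}=b|_{\mathsf{L}}$; without this compatibility the clean cancellation above would fail. Once these are in place, the rest is a formal manipulation of the contraction estimate.
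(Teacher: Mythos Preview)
Your proposal is correct and follows essentially the same approach as the paper's proof: pick $\psi\in\mathrm{Hom}(\mathsf{V}^+_p,\mathsf{L}_p)$ via the Lemma, use flow-invariance of the norm on $\mathsf{L}$ to get $\|\phi_t(l)\|=\|l\|$, and combine the operator norm inequality with the Anosov contraction to deduce dilation of $\mathsf{V}^+$. Your write-up is in fact slightly more explicit than the paper's, spelling out the intermediate step $(\phi_t\psi)(\phi_tv)=\phi_t(l)$ and the final rearranged inequality $\|\phi_tv\|>C^{-1}\exp(ct)\|v\|$.
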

Note that contraction/dilation is reversed for the bundles $\mathsf{V}^\pm$. This is consistent because the Anosov property gives contraction of $\sigma^*X^+$, which we identified with a subbundle of $\mathrm{Hom}(\mathsf{V}^+,(\mathsf{V}^-)^\perp) = (\mathsf{V}^+)^* \otimes (\mathsf{V}^-)^\perp$.

\section{Affine Anosov representations}

In this section, we define the notion of affine Anosov representations of a word hyperbolic group $\Gamma$ into the semidirect product $\SO^0(n+1,n)\ltimes\bR^{2n+1}$. In the following, we use the form $b_{n+1,n}$ given by the matrix
\[ J := \begin{pmatrix} & & I_n \\ & 1 \\ I_n \end{pmatrix}, \]
where $I_n$ denotes the $n\times n$ identity matrix. In particular, $\bR^n\times\{0\} = \lspan(e_1,\ldots,e_n)$ and $\{0\}\times\bR^n = \lspan(e_{n+2},\ldots,e_{2n+1})$ are transverse maximal isotropic subspaces:
\[ \bR^{2n+1} = (\bR^n\times\{0\}) \oplus (\{0\}\times\bR^n)^\perp, \]
where both summands are elements of
\[ \Is_n\left(\bR^{2n+1}\right) = \left\{ V \subset \bR^{2n+1} \ | \ \dim(V)=n, \ b_{n+1,n}|_{V\times V} \equiv 0 \right\}. \]
We denote the corresponding transverse parabolic subgroups in $G=\SO^0(n+1,n)$ by $P^+$ and $P^-$:
\begin{align*}
    P^+ & = \mathrm{Stab}_G(\bR^n\times\{0\}) \\
    P^- & = \mathrm{Stab}_G(\{0\}\times\bR^n).
\end{align*}
Then $G/(P^+\cap P^-)$ identifies with pairs $V_1,V_2 \in \Is_n(\bR^{n+1,n})$ such that $\bR^{2n+1} = V_1 \oplus (V_2)^\perp$; will also call such pairs \emph{transverse}. The intersection $P^+ \cap P^-$ is the reductive group $\GL^+(n)$:
\begin{lemma}
With the above notation, $P^+ \cap P^-$ is naturally identified with $\GL^+(n)$.
\begin{proof}
Since any element $X$ of $P^+ \cap P^-$ stabilizes both $\bR^n\times\{0\}$ and $\{0\}\times\bR^n$, it has to be of block form
\[ X = \begin{pmatrix} A_1 & B_1 & 0 \\ 0 & C & 0 \\ 0 & B_2 & A_2 \end{pmatrix}, \]
where $A_i$ are $n\times n$ matrices, $B_i$ are $n\times 1$ and $C$ is $1\times 1$. The equation $JXJ = (X^t)^{-1}$ reduces this further to the form
\[ X = \begin{pmatrix} A \\ & C \\ & & (A^t)^{-1} \end{pmatrix}, \]
where $A\in \GL(n)$ and $C=\pm 1$. Now since $X$ preserves orientation on $\bR^{2n+1}$, $C$ has to be $+1$. Moreover, we saw in section \ref{sec:indefinite_orthogonal} that we can consistently choose orientations on all elements of $\Is_n(\bR^{n+1,n})$, and an element $g\in\SO(p,q)$ preserves these orientations iff it lies in $\SO^0(p,q)$. We conclude that $A\in\GL^+(n)$.

\end{proof}
\end{lemma}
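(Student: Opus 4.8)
The plan is to show directly that any element $X \in P^+ \cap P^-$ is forced into the diagonal block form $\mathrm{diag}(A, C, (A^t)^{-1})$ with $A \in \GL^+(n)$ and $C = 1$, and that conversely any such matrix lies in $P^+ \cap P^-$; the map $X \mapsto A$ is then the desired identification, and it is clearly a group homomorphism since block-diagonal matrices of this shape multiply blockwise.

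First I would record that any $X$ stabilizing both $\bR^n \times \{0\}$ and $\{0\} \times \bR^n$ must have the block form
\[ X = \begin{pmatrix} A_1 & B_1 & 0 \\ 0 & C & 0 \\ 0 & B_2 & A_2 \end{pmatrix}, \]
simply because stabilizing $\bR^n \times \{0\} = \lspan(e_1,\dots,e_n)$ kills the bottom two block-rows in the first block-column, and stabilizing $\{0\} \times \bR^n = \lspan(e_{n+2},\dots,e_{2n+1})$ kills the top two block-rows in the last block-column. Next I would impose the condition that $X \in \SO^0(n+1,n)$ preserves the form $J$, i.e. $X^t J X = J$ (equivalently $J X J = (X^t)^{-1}$ since $J^2 = I$). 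Expanding $X^t J X = J$ in the above block decomposition, the off-diagonal blocks of the resulting equation force $B_1 = 0$, $B_2 = 0$, and $A_1^t A_2 = I_n$ (so $A_2 = (A_1^t)^{-1}$), while the middle entry gives $C^2 = 1$, hence $C = \pm 1$. Writing $A := A_1 \in \GL(n)$, this yields $X = \mathrm{diag}(A, C, (A^t)^{-1})$.

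It remains to pin down the orientation signs. Since $X \in \SO^0(n+1,n)$ has determinant $1$ and $\det X = \det(A) \cdot C \cdot \det((A^t)^{-1}) = C$, we get $C = +1$ immediately — this is cleaner than the ``preserves orientation on $\bR^{2n+1}$'' phrasing but amounts to the same thing. For $A \in \GL^+(n)$: here I would invoke the results of Section \ref{sec:indefinite_orthogonal}, namely that an element of $\SO(n+1,n)$ lies in the identity component $\SO^0(n+1,n)$ if and only if it preserves the consistent orientations on elements of $\Is_n(\bR^{n+1,n})$. Applying this to $X$ and to the specific isotropic plane $\bR^n \times \{0\}$, on which $X$ acts via $A$, we conclude that $A$ preserves orientation, i.e. $\det A > 0$. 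Conversely, given $A \in \GL^+(n)$, the matrix $\mathrm{diag}(A, 1, (A^t)^{-1})$ manifestly preserves $J$ and both isotropic planes, has determinant $1$, and preserves orientations on $\Is_n$ (again by Section \ref{sec:indefinite_orthogonal}, reversing the implication), so it lies in $P^+ \cap P^-$.

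I expect the main (and only mildly delicate) point to be the orientation bookkeeping for $A$: one must be careful that ``$X$ preserves orientations on $\Is_n(\bR^{n+1,n})$'' really does translate into ``$A$ has positive determinant,'' which relies on the compatibility statement from Section \ref{sec:indefinite_orthogonal} relating the chosen orientation on the isotropic plane $\bR^n \times \{0\}$ to a fixed orientation and the fact that $X|_{\bR^n \times \{0\}} = A$ in the standard basis. Everything else — the block-form computation and the form-preservation constraints — is a routine linear-algebra calculation that I would not spell out in full.
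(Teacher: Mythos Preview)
Your proposal is correct and follows essentially the same approach as the paper: block form from stabilizing the two isotropics, then the orthogonality condition $X^tJX=J$ to force the block-diagonal shape, then the determinant/orientation arguments for $C=1$ and $A\in\GL^+(n)$. You add a few details the paper omits (the explicit $\det X = C$ computation, the converse direction, and the remark that $X\mapsto A$ is a group homomorphism), but the core argument is identical.
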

In order to define what an Anosov representation into the affine group should be, we will require a class of subgroups corresponding to parabolic subgroups in reductive Lie groups. To that end, let $E=E^{2n+1}$ denote the affine space modeled on $\bR^{n+1,n}$, and $\affIs_n\left(E\right)$ the set of affine isotropic subspaces. By this we mean all affine subspaces whose underlying linear subspace is $n$-dimensional and isotropic.\\
In the linear case, we can interchangeably speak about either maximal isotropic subspaces or $(n+1)$-dimensional subspaces of signature $(n,1,0)$ -- here, the first number denotes degenerate directions and the second number denotes positive directions. Taking orthogonal complements allows to switch between the two sets, and any element $g\in\SO^0(n+1,n)$ fixing a maximal isotropic subspace also fixes its orthogonal complement.\\
However, this is no longer true in the affine case. Since there is no natural basepoint, there is no canonical way of choosing an orthogonal complement of an affine subspace of type $(n,1,0)$. Our construction will make use of these $(n+1)$-dimensional affine subspaces instead of affine maximal isotropic subspaces.

\begin{definition}  \label{def:neutral_section}
Let $F\subset E$ be an affine subspace of type $(n,1,0)$. Then we call the subgroup
\[ P_{\mathrm{aff}} = \mathrm{Stab}_{G\ltimes\bR^{2n+1}} F \]
a \emph{pseudoparabolic}.\\
Two affine subspaces $A_1,A_2$ of type $(n,1,0)$ will be called \emph{transverse} if their underlying vector subspaces $W_1,W_2$ satisfy $\bR^{2n+1} = W_1 \oplus (W_2)^\perp$. Two pseudoparabolics will be called \emph{transverse} if they are stabilizers of transverse affine subspaces.

\end{definition} 
\begin{remark}
Since the group $G\ltimes\bR^{2n+1}$ acts transitively on affine subspaces of type $(n,1,0)$, all pseudoparabolic subgroups are isomorphic and can be identified (albeit not canonically) with $P\ltimes\bR^{n+1}$, where $P<G$ is the stabilizer of some fixed maximal isotropic subspace of $\bR^{2n+1}$, and the group of translations along the orthogonal complement of the maximal isotropic is identified with $\bR^{n+1}$.
\end{remark}
As in the linear case, for transverse pseudoparabolics $P^\pm_\mathrm{aff}$, the quotient $(G\ltimes\bR^{2n+1}) / (P^+_\mathrm{aff} \cap P^-_\mathrm{aff}) =: \mathcal{X}_\mathrm{aff}$ can be identified with the space of transverse pairs of affine subspaces of type $(n,1,0)$, so we can alternatively view it as a subset
\[ \mathcal{X}_\mathrm{aff} \subset \left( (G\ltimes\bR^{2n+1})/P^+_\mathrm{aff} \times (G\ltimes\bR^{2n+1})/P^-_\mathrm{aff} \right). \]
It is the unique open $(G\ltimes\bR^{2n+1})$-orbit in the space of all pairs of affine subspaces of type $(n,1,0)$.\\
There is a natural map which takes two transverse subspaces as above and returns a (spacelike) vector in the linear part of their intersection, chosen to be normalized and according to an orientation convention:
\begin{definition}[Neutral section]
The \emph{neutral section} is the map
\begin{align*} 
\nu: \mathcal{X} & \to \bR^{2n+1} \\
(V_1,V_2) & \mapsto v,
\end{align*}
where $v\in (V_1)^\perp \cap (V_2)^\perp \cap S^1$ is chosen according to the following orientation convention: From section \ref{sec:indefinite_orthogonal}, we know that we can consistently choose orientations on elements of $\Is_n(\bR^{n+1,n})$. Pick any positively oriented bases $(v_1^1,\ldots,v_n^1)$ and $(v_1^2,\ldots,v_n^2)$ of $V_1$ and $V_2$, and choose $v$ such that $(v_1^1,\ldots,v_n^1,v,v_1^2,\ldots,v_n^2)$ is a positive basis of $\bR^{2n+1}$. This does not depend on the choices involved. We also write $\nu$ for the map
\[ \nu: \mathcal{X}_\mathrm{aff} \to \bR^{2n+1} \]
which takes the linear parts of the two affine subspaces and applies the previous definition.
\end{definition}

\begin{remark}
This neutral section is a natural generalization of the one defined in \cite{GLM} in the case $G=\SO^0(2,1)$.
\end{remark}

We now have to adjust the setup of bundles and flows to the affine case. Recall that the flow space of the hyperbolic group $\Gamma$ is defined as
\[\flow = \Gamma \backslash\cflow :=\Gamma \backslash (\partial_\infty\Gamma^{(2)}\times\bR).\]
We will make use of several bundles over the flow space $\flow$. They are defined in terms of a given representation $\rho:\Gamma\to G\ltimes\bR^{2n+1}$. The first one is the affine equivalent of the bundle $\mathsf{P}_\varrho$,
\[ \mathsf{P}_\rho = \Gamma \ \backslash \left( \cflow \times \mathcal{X}_\mathrm{aff} \right), \]
whose fiber is the space of transverse pairs of affine subspaces of type $(n,1,0)$. Next, we need the bundle
\[ \mathsf{R}_\rho = \Gamma \ \backslash\left( \cflow \times \bR^{2n+1} \right), \]
sections of which plays the role of a basepoint in affine space. In both cases, $\Gamma$ acts diagonally via its natural action on $\cflow$ and via the given representation into $G\ltimes\bR^{2n+1}$ on the second factor. Finally, there is the linear version of the latter bundle,
\[ \mathsf{R}_\varrho = \Gamma \ \backslash\left( \cflow \times \bR^{2n+1} \right), \]
where the action on the second factor is given by the linear part $\varrho:=L(\rho):\Gamma\to G$. Since $L(\rho)$ preserves the form $b_{n+1,n}$ on $\bR^{2n+1}$, this bundle comes equipped with a $(n+1,n)$-form.\\
We can lift a section $\tau: \flow \to \mathsf{R}_\rho$ to a $\Gamma$-equivariant section 
\[ \tilde{\tau}: \cflow\to \cflow\times\bR^{2n+1}. \]
Recall that the flow $\phi_t$ acts on this (trivial) bundle as the geodesic flow on the base and as the identity on the fibers,
\[ \phi_t(x,y,t_0,v) = (x,y,t_0+t,v). \]
This allows us to define the covariant derivative in flow direction by
\[ \nabla_\phi \tilde{\tau} (x,y,t_0) = \left. \frac{d}{dt}\right|_{t=0} \phi_{-t}\tilde{\tau} (x,y,t_0+t) \]
(assuming, for now, that this derivative exists). Since it is $\Gamma$-equivariant, it defines a section $\nabla_\phi \tau: \flow \to \mathsf{R}_\varrho$ where $\varrho=L(\rho)$.
Note that, since the lifted bundle was trivial, we might as well consider $\tilde{\tau}$ as a $\Gamma$-equivariant map $\tau': \cflow \to \bR^{2n+1}$ by projecting to the second factor. In this case, the derivative reads as
\[ \nabla_\phi \tau' (x,y,t_0) = \left. \frac{d}{dt}\right|_{t=0} \tau' (x,y,t_0+t). \]
We still prefer to keep track of the base point, however. This will help to avoid confusion in some formulas and calculations later on.\\
In an analogous way, the $G\ltimes\bR^{2n+1}$-equivariant map $\nu:\mathcal{X}_\mathrm{aff} \to \bR^{2n+1}$ extends to a bundle map
\[ \nu: \mathsf{P}_\rho \to \mathsf{R}_\varrho. \]
Finally, as in the linear case, there are two distributions $X^\pm_\mathrm{aff}$ on $\mathcal{X}_\mathrm{aff}$ coming from its product structure, given by 
\[ (X^\pm_\mathrm{aff})_{(gP^+_\mathrm{aff},gP^-_\mathrm{aff})} = \mathsf{T}_{gP^\pm_\mathrm{aff}} \left( G \ltimes \bR^{2n+1} / P^\pm_\mathrm{aff} \right) \]
for any $g\in G\ltimes\bR^{2n+1}$. Observe that these tangent spaces can be identified with the sum of the tangent space to the linear part and transverse translations: Let $(A^+,A^-)$ be a transverse pair of affine $(n,1,0)$ subspaces, and let $(W^+,W^-)$ be their linear parts. Then $A^+ \cap A^-$ is an affine line parallel to the one dimensional subspace $W^+\cap W^-$. Also, $(W^\pm)^\perp$ are $(n,0,0)$ subspaces with $W^\pm=(W^\pm)^\perp \oplus (W^+\cap W^-)$ and
\[\bR^{2n+1}=(W^+)^\perp \oplus (W^+\cap W^-) \oplus (W^-)^\perp.\]
Hence, $(A^+ \cap A^-) \cap [(W^+)^\perp \oplus (W^-)^\perp]$ is an unique point $(v^+,v^-)$ where $v^\pm\in(W^\pm)^\perp$ and we can write $A^\pm=v^\mp+W^\pm$. In fact, the set of transverse pairs of signature $(n,1,0)$ affine spaces is naturally in bijection with the set
\[\{(W^+,W^-,v^+,v^-)\mid v^\pm\in (W^\pm)^\perp\}.\]
From this, it follows that we can identify 
\[ \mathsf{T}_{(A^+,A^-)}\mathcal{X}_\mathrm{aff} = \mathsf{T}_{(W^+,W^-)}\mathcal{X} \oplus (W^+)^\perp \oplus (W^-)^\perp, \]
and the tangent space splits into the two components
\begin{equation} \label{Eq:tangent_affine} \mathsf{T}_{A^\pm} \left(G \ltimes \bR^{2n+1} / P^\pm_\mathrm{aff} \right)= \mathsf{T}_{W^\pm}(G/P^\pm) \oplus (W^\mp)^\perp. \end{equation}
The two distributions $X^\pm_\mathrm{aff}$ are $G\ltimes\bR^{2n+1}$-invariant and we will see them as vector bundles over $\mathsf{P}_\rho$.\\
Moreover, we have the flow $\phi_t$ acting on the bundles $\mathsf{P}_\rho$ and $\mathsf{R}_\rho$ as Gromov geodesic flow on the base and via parallel transport (with respect to the locally flat structure) on the fibers. Using the derivative of the flow on $\mathsf{P}_\rho$ in fiber directions gives an induced flow on the bundles $X^\pm_\mathrm{aff}$.
\begin{definition}  \label{Def:affine_Anosov}
Let $\Gamma$ be a word hyperbolic group and let 
\[\rho: \Gamma \mapsto G\ltimes\bR^{2n+1}\] be an injective homomorphism. Furthermore, let $P^+_{\mathrm{aff}}, P^-_{\mathrm{aff}}$ be two transverse pseudo-parabolic subgroups.  Then $\rho$ is called \emph{affine Anosov} (with respect to $P^\pm_{\mathrm{aff}}$) if and only if
\begin{enumerate}
\item The bundle $\mathsf{P}_\rho$ admits an \emph{affine Anosov section} $\sigma$, i.e.\ a section $\sigma: \flow \to \mathsf{P}_\rho$ such that:
 \begin{itemize}
     \item $\sigma$ is parallel (locally constant) along flow lines of the geodesic flow on $\flow$.
     \item The flow $\phi_t$ is contracting on the bundle $\sigma^*X^+_\mathrm{aff}$ and dilating on the bundle $\sigma^*X^-_\mathrm{aff}$.
 \end{itemize}
 \item There exists a H\"older section $\tau$ of the bundle $\mathsf{R}_\rho$ which is differentiable along flow lines and satisfies
\[ |b( \nabla_\phi\tau\mid \nu\circ \sigma ) | > 0. \]
\end{enumerate}
\end{definition}
Using \eqref{Eq:tangent_affine}, we see that the bundles $\sigma^*X^\pm_\mathrm{aff}$ split in the following way: Letting $L$ denote the map taking linear parts, we get an induced bundle map
\[ \mathsf{P}_\rho \xrightarrow{L} \mathsf{P}_\varrho\]
where $\varrho$ is the Linear part of $\rho$.
Then $L\circ \sigma$ is a section of $\mathsf{P}_\varrho$, and we have the decomposition
\begin{equation}    \label{Eq:affine_bundle_splitting}
    \sigma^*X^\pm_\mathrm{aff} = (L\circ\sigma)^*X^\pm \oplus \mathsf{V}^\mp.
\end{equation}
\begin{remark}
A representation $\rho$ satisfying Condition 1 of Definition \ref{Def:affine_Anosov} is equivalent to $\varrho$, the linear part of $\rho$, being Anosov with respect to $P^\pm$ (for more details please see Proposition \ref{prop:limitmap} and Theorem \ref{thm:affine_Anosov_proper}).
\end{remark}
%

\section{Affine deformations of Anosov representations}

Let $\Gamma$ be a word hyperbolic group and $G$,$P^\pm$ be as above. Also, let $\rho: \Gamma \rightarrow G\ltimes\bR^{2n+1}$ be an injective homomorphism with its linear part $\varrho:=L(\rho)$ Anosov with respect to $P^\pm$. We denote the Anosov section of $\varrho$ by $\sigma_{\varrho}$ and its lift by $\tilde{\sigma}_{\varrho}: \cflow \rightarrow \mathcal{X}$.
\begin{definition}[Neutralised section]
 A \emph{neutralised section} is a H\"older continuous, $\rho$-equivariant map
\[ f: \cflow \to \cflow\times\bR^{2n+1} \]
which is differentiable along flow lines and satisfies 
\[ \nabla_{{\phi}} f(x,y,t)\in\bR\nu\circ\tilde{\sigma}_{\varrho}(x,y,t) \]
for all $(x,y,t)\in \cflow$.
\end{definition}
\begin{proposition}\label{Prop:neutralised_section}
Every injective homomorphism $\rho: \Gamma \rightarrow G\ltimes\bR^{2n+1}$ whose linear part is Anosov with respect to $P^\pm$ admits neutralised sections.
\end{proposition}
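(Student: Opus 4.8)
The plan is to construct a neutralised section by first building an arbitrary Hölder-continuous, $\rho$-equivariant section $\tau_0$ of $\mathsf{R}_\rho$ that is differentiable along flow lines, and then correcting it so that its covariant derivative in the flow direction is forced to point in the neutral direction $\bR\nu\circ\tilde\sigma_\varrho$. The existence of such an initial section $\tau_0$ is a general fact about flat vector bundles over the flow space: the linear part $\varrho$ being Anosov guarantees that $\bR^{2n+1}$, as a $\rho$-module, carries enough structure to produce Hölder sections differentiable along flow lines. This is precisely the content of the appendix subsection on "existence of Hölder continuous sections in certain bundles over the flow space which are differentiable along flow lines" referenced in the organization section, so I would invoke that result to obtain $\tau_0$; alternatively one can build $\tau_0$ by hand, e.g. by averaging a continuous $\Gamma$-equivariant lift over the flow using a smooth bump function and the cocompactness of the $\Gamma$-action on $\cflow$.

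Once $\tau_0$ is in hand, write $g := \nabla_\phi\tau_0 : \cflow \to \bR^{2n+1}$ (viewed through the trivialization, a Hölder $\varrho$-equivariant map). Using the flow-invariant orthogonal splitting $\bR^{2n+1} = \mathcal{V}^+ \oplus \mathcal{L} \oplus \mathcal{V}^-$ from \eqref{Eq:splitting_ASec_equivar} associated to $\tilde\sigma_\varrho$, and the fact that $\nu\circ\tilde\sigma_\varrho$ spans a line inside the spacelike bundle $\mathcal{L}$, decompose $g = g^+ + g^{\mathcal{L}} + g^-$ with $g^\pm$ taking values in $\mathcal{V}^\pm$ and $g^{\mathcal L}$ in $\mathcal{L}$. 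Further split $g^{\mathcal L} = g^\nu + g^\perp$ along $\bR\nu\circ\tilde\sigma_\varrho$ and its $e$-orthogonal complement inside $\mathcal{L}$. The goal is to find a Hölder $\rho$-equivariant correction $h : \cflow \to \bR^{2n+1}$, differentiable along flow lines, such that $\nabla_\phi h = -(g^+ + g^- + g^\perp)$, i.e. $h$ absorbs everything except the neutral component; then $f := \tau_0 + h$ satisfies $\nabla_\phi f = g^\nu \in \bR\nu\circ\tilde\sigma_\varrho$, as desired.

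The correction $h$ is produced componentwise by integrating along flow lines, exploiting the contraction/dilation from the Anosov property. On the dilated bundle $\mathsf{V}^+$ (Corollary \ref{Cor:Anosov_contraction}) one sets $h^+(p) := \int_0^{\infty} \phi_s\big(g^+(\phi_{-s}p)\big)\,\mathrm{d}s$ — wait, one must orient the integral so that the exponential decay makes it converge: since $\mathsf V^+$ is dilated forward, $\phi_{-s}$ contracts it, so $h^+(p) := -\int_{-\infty}^{0} \phi_{s}\big(g^+(\phi_{-s}p)\big)\,\mathrm d s$ converges absolutely by the estimate $\|\phi_s v\| \le C e^{cs}\|v\|$ for $s\le 0$, and a differentiation under the integral sign gives $\nabla_\phi h^+ = -g^+$. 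Symmetrically, on the contracted bundle $\mathsf V^-$ one integrates over $[0,\infty)$. For the spacelike complement $g^\perp$ inside $\mathcal L$ there is no contraction to exploit, but here one uses a Livšic-type / cohomological argument: the relevant periods vanish by the orientation convention built into the neutral section, or more simply one enlarges $\mathcal L$'s role and observes that the differentiable-along-flow-lines sections are determined up to adding parallel sections, so $g^\perp$ can be killed by a suitable parallel (flow-constant) correction combined with the averaging construction. The Hölder regularity of $h$ in the transverse (base) directions follows from Hölder dependence of the data and the standard estimates for such integrals over a flow with Anosov-type hyperbolicity, as developed in the appendix. I expect the main obstacle to be exactly this last point: controlling the $\mathcal L$-component $g^\perp$ and establishing transverse Hölder continuity of the integrated corrections $h^\pm$ uniformly — the pointwise convergence is immediate from Corollary \ref{Cor:Anosov_contraction}, but propagating Hölder bounds requires the technical lemma on Hölder sections differentiable along flow lines from the appendix, together with compactness of $\flow$.
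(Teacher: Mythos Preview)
Your approach is essentially the paper's: build an initial section differentiable along flow lines (appendix), decompose its flow-derivative along $\mathcal V^+\oplus\mathcal L\oplus\mathcal V^-$, and integrate out the $\mathcal V^\pm$-components using the contraction/dilation of Corollary~\ref{Cor:Anosov_contraction}. The paper writes the two corrections as
\[
-\int_0^\infty \phi_t\bigl(\nabla_\phi^- s(x,y,t_0-t)\bigr)\,dt
\quad\text{and}\quad
+\int_0^\infty \phi_{-t}\bigl(\nabla_\phi^+ s(x,y,t_0+t)\bigr)\,dt,
\]
and then checks directly that $\nabla_\phi f = \nabla_\phi s - \nabla_\phi^+ s - \nabla_\phi^- s \in \mathcal L$.

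There is one genuine confusion in your proposal. You further split $g^{\mathcal L}=g^\nu+g^\perp$ and then worry about killing $g^\perp$ via a Liv\v sic-type argument. But in the setting of the paper, $\mathcal L$ is a \emph{line} bundle: for transverse maximal isotropics $V^+,V^-\subset\bR^{n+1,n}$, each $(V^\pm)^\perp$ has dimension $n+1$, and since $\bR^{2n+1}=V^+\oplus(V^-)^\perp$ one gets $\dim\bigl((V^+)^\perp\cap(V^-)^\perp\bigr)=1$. The neutral section $\nu\circ\tilde\sigma_\varrho$ spans $\mathcal L$ fiberwise, so $g^\perp=0$ automatically and the whole Liv\v sic/cohomological digression is unnecessary. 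Once you drop that, your proof is exactly the paper's. (Your concern about transverse H\"older regularity of the integrated corrections is reasonable; the paper does not spell this out either, relying on the uniform exponential decay and compactness of $\flow$.)
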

\begin{proof}
Using a partition of unity argument, we can construct a H\"older continuous, $\Gamma$-equivariant section

\[s:\cflow \to \cflow\times\bR^{2n+1}\] 
which is differentiable along flow lines (see Section \ref{sec:sections} for details on this).\\
We want to modify the section $s$ in such a way that it varies only in the direction of the neutral section as we follow any flow line in $\cflow$.\\
Recall that we defined the splitting
\[ \cflow \times \bR^{n+1,n} = \mathcal{V}^+ \oplus \mathcal{L} \oplus \mathcal{V}^- \]
in \eqref{Eq:splitting_ASec_equivar}, where $\mathcal{L}$ is a line bundle spanned by the neutral section. Let $\nabla^+_\phi s(x,y,t)\in\mathcal{V}^+_{(x,y,t)}$ and $\nabla^-_\phi s(x,y,t)\in\mathcal{V}^-_{(x,y,t)}$ be such that 
\[\nabla_\phi s (x,y,t)-\nabla^+_\phi s(x,y,t)-\nabla^-_\phi s(x,y,t)\in\mathcal{L}_{(x,y,t)}.\]
We note that $\nabla_\phi s$ and $\nabla^\pm_\phi s$ are all $\varrho(\Gamma)$ equivariant.

Now using Corollary \ref{Cor:Anosov_contraction} and the fact that \[\phi_t(x,y,t_0,v)=(x,y,t_0+t,v)\] we obtain the following inequalities for $t>0$:
\[ \left\|\phi_{-t}\left(\nabla^+_\phi s(x,y,t_0+t)\right)\right\|_{(x,y,t_0)} \leqslant Ce^{-ct}\left\|\nabla^+_\phi s(x,y,t_0+t)\right\|_{(x,y,t_0+t)}\] for some constants $C,c\in \bR$ and
\[ \left\| \phi_t \left( \nabla^-_\phi s(x,y,t_0-t) \right) \right\|_{(x,y,t_0)}\leqslant Ce^{-ct} \left\|\nabla^-_\phi s(x,y,t_0-t)\right\|_{(x,y,t_0-t)}\] for some constants $C,c\in \bR$.
Moreover, using continuity and $\varrho$-equivariance of $\nabla^\pm_\phi s$, compactness of $\flow$ and the fact that $\|\varrho(\gamma) v\|_{\gamma(x,y,t)}=\|v\|_{(x,y,t)}$ we get that $\|\nabla^\pm_\phi s(x,y,t)\|_{(x,y,t)}$ is bounded by some constant $B$. Hence 
\begin{align*}
&\lim_{k\to\infty}\left\|\int_0^k \phi_{\mp t} \left( \nabla^\pm_\phi s(x,y,t_0\pm t) \right) dt\right\|_{(x,y,t_0)}\\ &\leqslant \lim_{k\to\infty}\int_0^k \left\| \phi_{\mp t} \left( \nabla^\pm_\phi s(x,y,t_0\pm t) \right) \right\|_{(x,y,t_0)} dt\\
&\leqslant \lim_{k\to\infty}\int_0^k Ce^{-ct}\|\nabla^\pm_\phi s(x,y,t_0\pm t)\|_{(x,y,t_0\pm t)} dt\\
&\leqslant \lim_{k\to\infty}\int_0^k Ce^{-ct}B dt <\infty.
\end{align*}
Moreover, we observe that
\begin{align*}
&\left\|\int_k^K \phi_{\mp t} \left( \nabla^\pm_\phi s(x,y,t_0\pm t) \right) dt\right\|_{(x,y,t_0)}\\ &\leqslant \int_k^K \left\| \phi_{\mp t} \left( \nabla^\pm_\phi s(x,y,t_0\pm t) \right) \right\|_{(x,y,t_0)} dt\\
&\leqslant \int_k^K Ce^{-ct}\|\nabla^\pm_\phi s(x,y,t_0\pm t)\|_{(x,y,t_0\pm t)} dt\\
&\leqslant \int_k^K Ce^{-ct}B dt = \frac{1}{c}BC\left(e^{-ck}-e^{-cK}\right) .
\end{align*}
Hence, by the positivity and continuity of the exponential function we have for any $\epsilon>0$ constants $T_\epsilon$ such that for all $K>k>T_\epsilon$
\[\frac{1}{c}BC\left(e^{-ck}-e^{-cK}\right)\leqslant\frac{1}{c}BCe^{-ck}\left(1-e^{-c(K-k)}\right)\leqslant\frac{1}{c}BCe^{-cT_\epsilon}<\epsilon.\]
Therefore, the following map is well defined:
\begin{align*}
    f(x,y,t_0):= s(x,y,t_0) &- \int_0^\infty \phi_t \left( \nabla^-_\phi s(x,y,t_0-t) \right) dt\\ &+ \int_0^\infty \phi_{-t} \left( \nabla^+_\phi s(x,y,t_0+t) \right) dt.
\end{align*}
Now we notice that
\begin{align*}
 \nabla_\phi f(x,y,t_0) = \left.\frac{d}{dt}\right|_{t=0}  \phi_{-t}f&(x, y,t_0+t) \\
= \left.\frac{d}{dt}\right|_{t=0} \phi_{-t} s(x,y,t_0+t) & - \left.\frac{d}{dt}\right|_{t=0} \phi_{-t} \int_0^\infty \phi_l \left( \nabla^-_\phi s(x,y,t_0+t - l) \right) dl\\
& + \left.\frac{d}{dt}\right|_{t=0} \phi_{-t} \int_0^\infty \phi_{-l} \left( \nabla^+_\phi s(x,y,t_0+t+l) \right) dl\\
= \left.\frac{d}{dt}\right|_{t=0} \phi_{-t} s(x,y,t_0+t) & - \left.\frac{d}{dt}\right|_{t=0}  \int_{-t}^\infty \phi_{-t}\phi_{t+l} \left( \nabla^-_\phi s(x,y,t_0 - l) \right) dl\\
&+\left.\frac{d}{dt}\right|_{t=0}  \int_{t}^\infty \phi_{-t}\phi_{t-l} \left( \nabla^+_\phi s(x,y,t_0+l) \right) dl\\
 = \nabla_\phi s(x,y,t_0)-\nabla^-_\phi s(x,y&,t_0) -\nabla^+_\phi s(x,y,t_0). 
\end{align*}
Hence $f$ is a neutralised section as $\mathcal{L}_{(x,y,t)}=\bR\nu\circ\tilde{\sigma}_{\varrho}(x,y,t).$
\end{proof}

\begin{proposition}\label{prop:limitmap}
The bundle $\mathsf{P}_\rho$ corresponding to any injective homomorphism $\rho: \Gamma \rightarrow G\ltimes\bR^{2n+1}$, whose linear part $\varrho$ is Anosov with respect to $P^\pm$, admits an affine Anosov section.

\begin{proof}
Let $\sigma_{\varrho}$ be the Anosov section for the Anosov representation $\varrho$ and let $f_\rho$ be a neutralised section corresponding to $\rho$. By taking orthogonal complements, we think of the lift $\tilde{\sigma}_{\varrho}(u), \ u\in\cflow$ as a pair of $(n,1,0)$ vector subspaces $(V_u,W_u)$ of $\bR^{n+1,n}$. Now let us define the following affine section:
\[\tilde{\sigma}_\rho(u):= (f_\rho(u)+ V_u, f_\rho(u)+ W_u).\]
We observe that 
\begin{align*}
\tilde{\sigma}_\rho (\gamma u) &= (f_\rho(\gamma u)+ V_{\gamma u}, f_\rho(\gamma u)+ W_{\gamma u})\\
&= (\rho(\gamma)f_\rho(u)+\varrho(\gamma)V_u, \rho(\gamma)f_\rho(u)+\varrho(\gamma)W_u)\\
&= (\rho(\gamma)(f_\rho(u)+V_u), \rho(\gamma)(f_\rho(u)+W_u))\\
&= \rho(\gamma) \tilde{\sigma}_\rho (u).
\end{align*}
Moreover, as $f_\rho$ is a neutralised section, $f_\rho(\phi_t u)-f_\rho(u)\in\bR\nu\circ\tilde{\sigma}_{\varrho}(u) $. Hence for some constants $c_1,c_2\in\bR$ we have
\begin{align*}
\tilde{\sigma}_\rho (\phi_t u) &= (f_\rho(\phi_t u)+ V_{\phi_t u}, f_\rho(\phi_t u)+ W_{\phi_t u})\\
&= (f_\rho(u)+ c_1\nu(V_u,W_u) + V_u, f_\rho(u)+ c_2\nu(V_u,W_u) + W_u)\\
&= (f_\rho(u)+ V_u, f_\rho(u)+ W_u) = \tilde{\sigma}_\rho (u).
\end{align*}
Hence, the map $\tilde{\sigma}_\rho$ gives rise to a section 
\[\sigma_\rho: \flow \rightarrow \mathsf{P}_\rho\]
which is parallel along the flow lines of the geodesic flow on $\flow$. It follows from the above construction that $L\circ\sigma_\rho=\sigma_\varrho$. Now we need to show that the bundle $\sigma_\rho^*X^+_\mathrm{aff}$ is contracted by the flow and $\sigma_\rho^*X^-_\mathrm{aff}$ is dilated. Recall from \eqref{Eq:affine_bundle_splitting} that we have a splitting $\sigma_\rho^*X^\pm_\mathrm{aff} = (L\circ\sigma_\rho)^*X^\pm \oplus \mathsf{V}^\mp$. 
By assumption $L\circ\sigma_\rho=\sigma_\varrho$ is a (linear) Anosov section, so the bundles $(L\circ\sigma_\rho)^*X^+$ resp. $(L\circ\sigma_\rho)^*X^-$ are contracted resp. dilated by the flow. By Corollary \ref{Cor:Anosov_contraction}, the bundles $V^-$ resp. $V^+$ are also contracted resp. dilated by the flow, so the result follows.
\end{proof}
\end{proposition}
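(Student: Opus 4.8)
The plan is to produce the affine Anosov section by combining two ingredients that are already available: the Anosov section $\sigma_\varrho$ of the linear part $\varrho = L(\rho)$, and a neutralised section $f_\rho$ of $\mathsf{R}_\rho$, whose existence is Proposition \ref{Prop:neutralised_section}. Concretely, I would lift $\sigma_\varrho$ to the $\Gamma$-equivariant map $\tilde\sigma_\varrho\colon \cflow \to \mathcal{X}$, pass to orthogonal complements so as to view $\tilde\sigma_\varrho(u) = (E_u, F_u)$ as a transverse pair of $(n,1,0)$-subspaces $(V_u, W_u) = ((E_u)^\perp, (F_u)^\perp)$, and then set
\[ \tilde\sigma_\rho(u) := \bigl( f_\rho(u) + V_u, \ f_\rho(u) + W_u \bigr) \in \mathcal{X}_\mathrm{aff}, \]
a transverse pair of affine $(n,1,0)$-subspaces with common base point $f_\rho(u)$. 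This is the natural candidate: the linear data is inherited from the linear Anosov section, and the neutralised section serves only to pick a base point in affine space.

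The first thing to check is that $\tilde\sigma_\rho$ descends to a section $\sigma_\rho\colon \flow \to \mathsf{P}_\rho$, i.e.\ that it is $\Gamma$-equivariant; this is formal, using the $\rho$-equivariance of $f_\rho$, the $\varrho$-equivariance of $u \mapsto (V_u, W_u)$, and the fact that $\rho(\gamma)$ acts as an affine map with linear part $\varrho(\gamma)$. The second, and only genuinely non-formal, point is that $\sigma_\rho$ is parallel along flow lines. Here the key geometric observation is that the neutral vector $\nu\circ\tilde\sigma_\varrho(u)$ lies in $(E_u)^\perp \cap (F_u)^\perp = V_u \cap W_u$; hence translating either $V_u$ or $W_u$ by a vector of $\bR\,\nu\circ\tilde\sigma_\varrho(u)$ leaves the corresponding affine subspace unchanged. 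Since $f_\rho$ is neutralised and $\tilde\sigma_\varrho$ is flow-constant, $f_\rho(\phi_t u) - f_\rho(u) \in \bR\,\nu\circ\tilde\sigma_\varrho(u)$ for all $t$, and therefore $\tilde\sigma_\rho(\phi_t u) = \tilde\sigma_\rho(u)$. By construction $L\circ\sigma_\rho = \sigma_\varrho$.

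It then remains to verify the contraction/dilation condition of Definition \ref{Def:affine_Anosov}(1). For this I would invoke the decomposition \eqref{Eq:affine_bundle_splitting}, namely $\sigma_\rho^*X^\pm_\mathrm{aff} = (L\circ\sigma_\rho)^*X^\pm \oplus \mathsf{V}^\mp$. Since $L\circ\sigma_\rho = \sigma_\varrho$ is a linear Anosov section, the flow contracts $(L\circ\sigma_\rho)^*X^+$ and dilates $(L\circ\sigma_\rho)^*X^-$ by Definition \ref{Def:Anosov}; by Corollary \ref{Cor:Anosov_contraction} it contracts $\mathsf{V}^-$ and dilates $\mathsf{V}^+$. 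Hence every summand of $\sigma_\rho^*X^+_\mathrm{aff}$ is contracted and every summand of $\sigma_\rho^*X^-_\mathrm{aff}$ is dilated; uniformity of the constants follows from compactness of $\flow$, and $\sigma_\rho$ is an affine Anosov section.

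I expect the only real obstacle inside this argument to be the flow-invariance step — specifically, being precise about where $\nu\circ\tilde\sigma_\varrho(u)$ sits relative to $V_u$ and $W_u$, and checking that the displacement $f_\rho(\phi_t u) - f_\rho(u)$ is absorbed into \emph{both} affine subspaces simultaneously. Everything else is either bookkeeping (equivariance) or a direct quotation of Corollary \ref{Cor:Anosov_contraction} together with the linear Anosov hypothesis. The substantive analytic input — the averaging construction producing $f_\rho$ out of an arbitrary H\"older section differentiable along flow lines — has been packaged into Proposition \ref{Prop:neutralised_section} and does not recur here.
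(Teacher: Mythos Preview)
Your proposal is correct and follows essentially the same approach as the paper: construct $\tilde\sigma_\rho(u) = (f_\rho(u)+V_u,\,f_\rho(u)+W_u)$ from the linear Anosov section and a neutralised section, verify equivariance and flow-invariance via $\nu\circ\tilde\sigma_\varrho(u)\in V_u\cap W_u$, then deduce the contraction/dilation from the splitting \eqref{Eq:affine_bundle_splitting} together with Corollary~\ref{Cor:Anosov_contraction}. The paper's proof is the same argument with the same ingredients in the same order.
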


\section{Margulis spacetimes}

In this section we give a brief overview of Margulis spacetimes and present some known results which are related to our main Theorem. 

Margulis spacetimes have a long history starting with the study of affine crystallographic groups. In 1964, Auslander \cite{Aus} failed at an attempt to prove the following statement which was later given the status of a conjecture bearing Auslander's name by Fried--Goldmann \cite{FG}: 
\begin{conjecture}
Affine crystallographic groups are virtually solvable.
\end{conjecture}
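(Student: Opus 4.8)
The plan is to reduce the conjecture to a statement about proper affine actions of nonabelian free groups and then to play properness of the action off against its cocompactness. First I would invoke the Tits alternative, already used in the introduction: a finitely generated subgroup of $\GL(n,\bR)$ is either virtually solvable or contains a nonabelian free subgroup $F$. Since the linear part $L:\Gamma\to\GL(n,\bR)$ of an affine crystallographic group $\Gamma$ is finitely generated, it suffices to rule out the case that $\Gamma$ contains such an $F$. After passing to a torsion-free subgroup of finite index (Selberg's lemma), properness and cocompactness of the affine action make $\bR^n/\Gamma$ a closed aspherical manifold, so $\Gamma$ is a Poincar\'e duality group of dimension $n$; in particular $\mathrm{cd}(\Gamma)=n$. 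The goal is to contradict the presence of a free subgroup under these constraints.

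The second step is to analyse the Zariski closure $G$ of $L(\Gamma)$ via a Levi decomposition $G = M \ltimes U$ into a reductive part $M$ and its unipotent radical $U$. Properness severely restricts which $G$ can occur: the translational part cannot be absorbed into directions where $G$ acts with a spectral gap, and by the Fried--Goldman type analysis one is driven toward the case where, up to compact and solvable factors, $G$ is semisimple acting on $V=\bR^n$ without invariant affine subspaces. The prototype of this situation is precisely $\SO^0(n+1,n)$ acting on $\bR^{2n+1}$, the setting studied in this paper.

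The heart of the argument is then to show that such a semisimple linear part is incompatible with a proper \emph{and} cocompact action. Here I would use the Margulis invariant: for the free subgroup $F$, properness forces the normalised Margulis invariant to take a single definite sign on all of $F$, the complete positivity/negativity phenomenon of Margulis and of Goldman--Labourie--Margulis \cite{GLM}. In the language of this paper, properness together with an Anosov linear part is equivalent to the affine Anosov condition, whose affine Anosov section organises the dynamics along the neutral section $\nu$; the resulting uniform drift produces crooked, diverging fundamental domains in the spirit of Drumm, whose quotient is an open handlebody rather than a closed manifold, contradicting cocompactness and hence the assumed crystallographic structure.

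The main obstacle is precisely this last step in full generality. The incompatibility of properness with cocompactness is controllable when the linear part is Anosov, where the dynamical framework of this paper applies and the Margulis invariant is uniformly well behaved; but ruling it out for \emph{every} non-virtually-solvable linear part requires handling arbitrary semisimple Zariski closures, arbitrary representations on $V$, and linear parts with parabolic or mixed behaviour where no uniform spectral gap exists. Controlling the Margulis invariant uniformly across all real ranks and all signatures is exactly where the conjecture remains open beyond low dimensions, so I would expect to prove the statement only under the additional hypothesis that the linear part is Anosov (as in the main theorem of this paper), rather than unconditionally.
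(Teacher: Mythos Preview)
The statement you are attempting to prove is not a theorem in the paper; it is the Auslander \emph{Conjecture}, explicitly recorded as open. The paper remarks that it has been verified only in $\bR^3$ (Fried--Goldman) and in $\bR^n$ for $n<7$ (Abels--Margulis--Soifer), and uses it purely as historical motivation. There is therefore no ``paper's own proof'' to compare against, and any complete argument would be a major new result well beyond the scope of this article.

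Your sketch reflects this honestly in its final paragraph, where you concede that the Margulis-invariant step only goes through when the linear part is Anosov. That concession is the whole difficulty: the reduction via the Tits alternative and Selberg's lemma to a torsion-free $\Gamma$ containing a nonabelian free subgroup is standard, but the subsequent claim that a Fried--Goldman type analysis forces the semisimple part of the Zariski closure into an $\SO^0(n+1,n)$-like situation is exactly what is not known in general. Even granting an Anosov linear part, the step ``uniform drift of the Margulis invariant $\Rightarrow$ Drumm-type fundamental domain $\Rightarrow$ open handlebody quotient'' is not established in this generality; crooked-plane constructions are specific to low signature and do not currently yield a contradiction to cocompactness for arbitrary $\SO^0(n+1,n)\ltimes\bR^{2n+1}$. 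So the proposal is a reasonable outline of why one expects the conjecture to be true, but it is not a proof, and the paper does not claim one.
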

This conjecture is still open although it has been answered in affirmative in the case of $\mathbb{R}^3$ by Fried--Goldman and in the case of $\mathbb{R}^n$ for $n<7$ by Abels--Margulis--Soifer. In \cite{Mil}, Milnor asked the further question whether the assumption of cocompactness could be dropped in the Auslander conjecture.   

Margulis answered Milnor's question in the negative by showing the existence of proper affine actions of non-abelian free groups on $\mathbb{R}^3$. The quotient space of such an action is called a Margulis spacetime.

Moreover, Fried--Goldman (\cite{FG}) showed that the linear parts of the non-abelian free groups acting properly on $\mathbb{R}^3$ as affine transformations lie in some conjugate of $\mathsf{SO}(2,1)$. Subsequently, Abels--Margulis--Soifer showed existence of properly discontinuous actions of non-abelian free subgroups of $\mathsf{SO}^0(n+1,n)\ltimes\mathbb{R}^{2n+1}$ on $\mathbb{R}^{2n+1}$. Recently, Smilga (\cite{Smilga}) showed existence of proper actions on $\mathfrak{g}$ of non-abelian free subgroups of $\mathsf{G}\ltimes\mathfrak{g}$ where $\mathsf{G}$ is any semisimple Lie group acting adjointly on its Lie algebra $\mathfrak{g}$. 

In (\cite{Margulis1},\cite{Margulis2}), Margulis introduced a key tool to decide properness. He introduced certain invariants, called Margulis invariants, which behave like length functions to gauge the properness of an action. 
\begin{definition}
Let $\Gamma$ be a word hyperbolic group and let $\rho:\Gamma\rightarrow\mathsf{SO}^0(n+1,n)\ltimes\mathbb{R}^{2n+1}$
be an injective homomorphism such that its linear part $\varrho$ is Anosov with respect to $P^\pm$. Then the Margulis invariant corresponding to any $\gamma\in\Gamma$, is defined as follows:
\[\alpha_\rho(\gamma):=b(\mathsf{u}_\rho(\gamma)\mid\nu\circ\sigma_{\varrho}(\gamma^-,\gamma^+))\]
where $\sigma_{\varrho}$ is the Anosov section corresponding to $\varrho$ and $\rho(\gamma)=(\varrho(\gamma),\mathsf{u}_\rho(\gamma))$.
\end{definition}
In particular, he proved the following result which we restate here using the terminology of Anosov representations:
\begin{lemma}[Opposite-sign lemma]
Let $\Gamma$ be a non-abelian free group and let $\rho:\Gamma\rightarrow\mathsf{SO}^0(2,1)\ltimes\mathbb{R}^3$
be an injective homomorphism such that its linear part $\varrho$ is discrete. Then $\rho(\Gamma)$ acts properly on $\mathbb{R}^3$ only if its Margulis invariant spectrum is either completely positive or completely negative.
\end{lemma}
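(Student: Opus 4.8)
The plan is to run Margulis's classical argument by contradiction, adapted to the framework of Sections \ref{sec:geodesic:flow}--\ref{sec:geodesic:flow} and the Anosov machinery above. We work in the setting in which the Margulis invariant is actually defined, i.e.\ $\varrho = L(\rho)$ is Anosov with respect to $P^\pm$ (for the Lemma one has $n=1$). Suppose, for contradiction, that the spectrum is neither everywhere $\geq 0$ nor everywhere $\leq 0$, so there are $g,h\in\Gamma$ with $\alpha_\rho(g) > 0 > \alpha_\rho(h)$.

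\textbf{Step 1 (reduction to a ping-pong pair).} Since the axis of $\gamma^k$ ($k\geq 1$) equals that of $\gamma$, along which the neutral section is flow-constant while $\rho(\gamma^k)$ translates $k$ times as far as $\rho(\gamma)$, one has $\alpha_\rho(\gamma^k) = k\,\alpha_\rho(\gamma)$; so we may freely replace $g,h$ by high powers. Using the north--south dynamics of the $\Gamma$-action on $\bdry$ recalled in Section \ref{sec:geodesic:flow}, after passing to such powers we may assume the four points $g^\pm, h^\pm\in\bdry$ are pairwise distinct; then $g,h$ are in Schottky position and $\{g^m h^n \mid m,n\geq 1\}$ consists of pairwise distinct hyperbolic elements of $\Gamma$.

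\textbf{Step 2 (quasi-additivity of $\alpha_\rho$).} The key estimate is that there is a constant $C = C(g,h)$ with
\[ \abs{\alpha_\rho(g^m h^n) - m\,\alpha_\rho(g) - n\,\alpha_\rho(h)} \leq C \qquad \text{for all } m,n\geq 1. \]
Fix a neutralised section $f$ of $\rho$ (Proposition \ref{Prop:neutralised_section}) and write $\nabla_\phi f = \beta\cdot(\nu\circ\tilde{\sigma}_\varrho)$; as in the proof of that Proposition one checks that $\beta$ is $\Gamma$-invariant, hence a Hölder function on the \emph{compact} space $\flow$. Evaluating the $\rho$-equivariance of $f$ along the axis of a hyperbolic $\gamma$ — where $\nu\circ\tilde{\sigma}_\varrho$ is flow-constant and $\rho(\gamma)$ acts as the time-$\ell(\gamma)$ flow — gives $\rho(\gamma)p - p = \big(\int_0^{\ell(\gamma)}\beta(\gamma^-,\gamma^+,t)\,dt\big)\,\nu\circ\tilde{\sigma}_\varrho$ for $p$ the $\bR^{2n+1}$-part of $f$ on the axis, and pairing with the neutral vector yields the identity $\alpha_\rho(\gamma) = \int_0^{\ell(\gamma)}\beta(\gamma^-,\gamma^+,t)\,dt$. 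Now the axis of $g^m h^n$ in $\cflow$ stays within bounded distance of the broken geodesic obtained by traversing the $g$-axis $m$ times and then the $h$-axis $n$ times, the distance decaying exponentially away from the bounded ``transition'' regions; since $\beta$ is bounded and Hölder, integrating the difference of the two contributions produces a bound independent of $m,n$. (Alternatively the estimate can be proved directly from the affine cocycle $\mathsf{u}_\rho$ using the exponential contraction of Corollary \ref{Cor:Anosov_contraction}.)

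\textbf{Step 3 (the bad sequence and the contradiction).} Because $\alpha_\rho(g) > 0 > \alpha_\rho(h)$, there are infinitely many pairs $(m_k,n_k)$ with $m_k,n_k\to\infty$ and $\abs{m_k\alpha_\rho(g) + n_k\alpha_\rho(h)} \leq \alpha_\rho(g)$; set $w_k := g^{m_k}h^{n_k}$. By Step 2, $\abs{\alpha_\rho(w_k)} \leq C'$ with $C'$ independent of $k$, and by Step 1 the $w_k$, hence the $\rho(w_k)$, are pairwise distinct. Fix a compact $\mathcal{D}\subset\cflow$ with $\Gamma\cdot\mathcal{D} = \cflow$ (possible since $\flow$ is compact), pick for each $k$ a point $u_k$ on the axis of $w_k$ lying in $\mathcal{D}$, and let $p_k$ be the $\bR^{2n+1}$-component of $f(u_k)$. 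Then $\|p_k\| \leq \sup_{\mathcal{D}}\|f\| =: M$, and the equivariance identity gives $\rho(w_k)p_k - p_k = \alpha_\rho(w_k)\,\nu\circ\tilde{\sigma}_\varrho(u_k)$, whence $\|\rho(w_k)p_k - p_k\| \leq \abs{\alpha_\rho(w_k)}\cdot\sup_{\mathcal{D}}\|\nu\circ\tilde{\sigma}_\varrho\| \leq C''$, again uniformly in $k$. Therefore the ball $B = \overline{B}(0, M+C'')\subset\bR^{2n+1}$ satisfies $\rho(w_k)B\cap B\neq\emptyset$ for every $k$, while the $\rho(w_k)$ are infinitely many distinct elements of $\rho(\Gamma)$ — contradicting proper discontinuity of the action of $\rho(\Gamma)$ on $\bR^{2n+1}$. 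Hence $\alpha_\rho$ cannot take both a positive and a negative value.

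\textbf{Main obstacle.} The technical heart is Step 2, the uniform-in-$(m,n)$ quasi-additivity: one needs that the axis of $g^m h^n$ tracks the broken geodesic $(\text{$g$-axis})^m * (\text{$h$-axis})^n$ with fellow-travelling distance decaying exponentially in the interior of each block — a Morse-type estimate in the hyperbolic space $\cflow$ — so that, combined with the Hölder regularity of $\beta$, the accumulated error stays bounded rather than growing linearly in $m+n$. Once Step 2 is in place, Steps 1 and 3 are essentially bookkeeping built on the neutralised section of Section \ref{sec:geodesic:flow} and the compactness of $\flow$.
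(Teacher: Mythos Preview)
The paper does not prove this lemma. It is stated in Section 6 as a classical result of Margulis (\cite{Margulis1},\cite{Margulis2}) and attributed to him without proof; it serves only as historical background and motivation for the Goldman--Labourie--Margulis converse and for the paper's own results. So there is nothing to compare your argument against.

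That said, your outline is essentially Margulis's original strategy, rephrased through the neutralised-section formalism of Proposition~\ref{Prop:neutralised_section} and the integral formula of Lemma~\ref{Lem:Margulis_invariant_length}. Step~1 and Step~3 are fine (modulo the minor point that ensuring $g^\pm,h^\pm$ are pairwise distinct is achieved not by passing to powers but by conjugating one of the elements, using conjugation-invariance of $\alpha_\rho$). The genuine content, as you identify, is Step~2: the quasi-additivity estimate. Your sketch is morally correct, but the fellow-travelling statement you invoke is not something the paper establishes, and ``$\beta$ bounded and H\"older, hence the error is bounded'' hides a real computation --- one must control how the neutral direction $\nu\circ\tilde\sigma_\varrho$ along $\mathrm{axis}(g^mh^n)$ compares to that along the $g$- and $h$-axes in the transition region, not just how close the underlying geodesics are. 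Margulis's original proof handles this directly via the affine cocycle and the eigenvalue gap rather than through a flow-space integral, which is closer to your parenthetical ``alternative'' route. If you want a self-contained proof in the paper's language, that alternative is probably the cleaner one to flesh out.
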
 
In \cite{GLM} Goldman--Labourie--Margulis introduced and proved the appropriate converse direction of the opposite sign lemma using geodesic currents and a generalised Margulis invariant. They also showed that the space of such Margulis spacetimes is an open and fiber wise convex subset of the tangent bundle of the Fricke-Teichm\"uller space. In (\cite{Ghosh1},\cite{Ghosh2}) Ghosh showed that representations giving rise to Margulis spacetimes with Schottky linear part are Anosov. Proposition 3.0.5 of \cite{Ghosh2} and Propositions 7.1 and 8.1 of \cite{GLM} gives us the following result:
\begin{theorem}
Let $\Gamma$ be a non-abelian free group and let $\rho:\Gamma\rightarrow\mathsf{SO}^0(2,1)\ltimes\mathbb{R}^3$
be an injective homomorphism. Then $\rho$ is affine Anosov with respect to $P^\pm_\mathrm{aff}$ if and only if $\varrho$ is Anosov with respect to $P^\pm$ and $\rho(\Gamma)$ acts properly on $\mathbb{R}^3$.
\end{theorem}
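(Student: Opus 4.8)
The plan is to prove the equivalence in two directions, using the results already established in the excerpt. For the direction ``affine Anosov $\Rightarrow$ linear part Anosov and proper action'', note that part (1) of Definition \ref{Def:affine_Anosov} gives an affine Anosov section $\sigma$; applying the bundle map $L: \mathsf{P}_\rho \to \mathsf{P}_\varrho$ and using the splitting \eqref{Eq:affine_bundle_splitting}, the section $L\circ\sigma$ is parallel along flow lines and, since contraction of $\sigma^*X^+_\mathrm{aff} = (L\circ\sigma)^*X^+ \oplus \mathsf{V}^-$ on the direct summand $(L\circ\sigma)^*X^+$ is inherited, it is a linear Anosov section. Hence $\varrho$ is Anosov with respect to $P^\pm$. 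For properness, I would use part (2): the Margulis invariant $\alpha_\rho(\gamma) = \langle \mathsf{u}_\rho(\gamma) \mid \nu\circ\sigma_\varrho(\gamma^-,\gamma^+)\rangle$ can be recovered as $\int_\gamma \langle\nabla_\phi\tau \mid \nu\circ\sigma\rangle$ along the periodic orbit of $\gamma$, so the strict positivity $|\langle\nabla_\phi\tau \mid \nu\circ\sigma\rangle| > 0$ on the compact flow space forces the normalized Margulis invariant to be uniformly bounded away from zero and of constant sign; by the (extended) opposite-sign lemma together with the properness criterion of Goldman--Labourie--Margulis (and its higher-dimensional analogue, via the (AMS)-proximality discussion in the appendix), this implies $\rho(\Gamma)$ acts properly on $\bR^{2n+1}$. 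This is essentially the content of Theorem \ref{thm:proper_affine_Anosov}.

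For the converse, assume $\varrho = L(\rho)$ is Anosov with respect to $P^\pm$ and $\rho(\Gamma)$ acts properly. By Proposition \ref{prop:limitmap}, the bundle $\mathsf{P}_\rho$ already admits an affine Anosov section $\sigma_\rho$ with $L\circ\sigma_\rho = \sigma_\varrho$, so condition (1) of Definition \ref{Def:affine_Anosov} is satisfied automatically. The work is in verifying condition (2): one must produce a H\"older section $\tau$ of $\mathsf{R}_\rho$, differentiable along flow lines, with $|\langle\nabla_\phi\tau \mid \nu\circ\sigma_\rho\rangle| > 0$ everywhere. Here I would start from a neutralised section $f_\rho$ (Proposition \ref{Prop:neutralised_section}), for which $\nabla_\phi f_\rho \in \bR\,\nu\circ\tilde\sigma_\varrho$ pointwise, so that $\langle\nabla_\phi f_\rho \mid \nu\circ\sigma_\rho\rangle = \|\nabla_\phi f_\rho\|^2 \cdot(\pm 1)$ vanishes only where $\nabla_\phi f_\rho = 0$. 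The point is then to show that properness of the action prevents this quantity from vanishing on too large a set, and more precisely that one can choose the neutralised section so that it never vanishes. This is where the equivalence between properness and a sign condition on the extended Margulis invariant enters: by the converse direction of the opposite-sign lemma (Goldman--Labourie--Margulis in the Fuchsian case, and the results recalled in Section 6 together with the appendix on (AMS)-proximality in general), properness implies that the normalized Margulis invariant has constant sign and is bounded away from zero on all flow-invariant probability measures, hence the associated Liv\v sic-cohomology class is represented by a function bounded away from zero; pulling this back to a choice of $\tau$ gives $|\langle\nabla_\phi\tau \mid \nu\circ\sigma_\rho\rangle| > 0$. This is the content of Theorem \ref{thm:affine_Anosov_proper}.

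The main obstacle I expect is the converse direction, specifically translating ``properness of the affine action'' into the pointwise positivity statement $|\langle\nabla_\phi\tau\mid\nu\circ\sigma_\rho\rangle| > 0$. The subtlety is that properness is an a priori statement about the discrete group action on $\bR^{2n+1}$, whereas condition (2) is a statement about a section over the entire flow space, including non-periodic orbits; bridging this requires the machinery of flow-invariant measures and the density of periodic orbits, plus the fact (from the appendix) that the relevant limit maps and neutral sections behave well under the proximality hypotheses coming from the Anosov condition on $\varrho$. One must also be careful that in the higher-rank case $\SO^0(n+1,n)$ with $n$ possibly even, properness can only occur in restricted situations (cf. Abels--Margulis--Soifer), so the argument must be structured so that it is vacuous or automatic precisely when no proper actions exist. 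I would handle this by invoking the (AMS)-proximality results of the appendix to control the linear dynamics and then running the Goldman--Labourie--Margulis argument with the generalised Margulis invariant, which is exactly the strategy the paper sets up in Sections 5 through 7.
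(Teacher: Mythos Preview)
Your overall strategy is right, but two points deserve correction. First, you have swapped the theorem labels: in the paper, Theorem~\ref{thm:proper_affine_Anosov} is the implication ``proper action $+$ linear part Anosov $\Rightarrow$ affine Anosov'', while Theorem~\ref{thm:affine_Anosov_proper} is ``affine Anosov $\Rightarrow$ proper action $+$ linear part Anosov''. Your paragraph labeled ``converse'' actually describes the proof of Theorem~\ref{thm:proper_affine_Anosov}, and vice versa.

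Second, and more substantively: the paper does not prove this particular $\SO^0(2,1)$ statement at all. It is stated in Section~6 as a consequence of \cite{Ghosh2} and \cite{GLM}, serving as motivation for the general $\SO^0(n+1,n)$ result proved in Section~7. So the ``paper's own proof'' is simply a citation. Your plan of deducing it as the $n=1$ case of Theorems~\ref{thm:proper_affine_Anosov} and~\ref{thm:affine_Anosov_proper} is perfectly legitimate and is in fact how the paper subsumes the classical case.

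Where your sketch genuinely diverges from the paper is in the direction ``affine Anosov $\Rightarrow$ proper''. You propose invoking the Goldman--Labourie--Margulis properness criterion (nonvanishing of the extended Margulis invariant over all flow-invariant measures) as a black box. This is fine for $\SO^0(2,1)$, where that criterion is available, but the paper's argument in Theorem~\ref{thm:affine_Anosov_proper} is self-contained: assume non-properness, use (AMS)-proximality to produce a sequence $\gamma_m$ with $l(\gamma_m)\to\infty$ and convergent axes, observe that $\alpha_\rho(\gamma_m)$ stays bounded, and derive a contradiction with the uniform lower bound coming from condition~(2) via Lemma~\ref{Lem:Margulis_invariant_length}. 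This direct route is what makes the argument work in arbitrary signature without relying on a pre-existing properness criterion.
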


In the next section we extend this Theorem and prove it for injective homomorphisms of any word hyperbolic group $\Gamma$ into $\mathsf{SO}^0(n+1,n)\ltimes\mathbb{R}^{2n+1}$.

\section{Margulis spacetimes vs Affine Anosov representations}

In this section we will show that affine Anosov representations always give rise to Margulis spacetimes and certain Margulis spacetimes always come from affine Anosov representations.

\begin{theorem} \label{thm:proper_affine_Anosov}
Let $\Gamma$ be a word hyperbolic group and let $\rho:\Gamma\rightarrow\mathsf{SO}^0(n+1,n)\ltimes\bR^{2n+1}$ be an injective homomorphism which acts properly on $\mathbb{R}^{2n+1}$ with its linear part $\varrho$ Anosov with respect to $P^\pm$. Then $\rho$ is affine Anosov with respect to $P^\pm_{\mathrm{aff}}$.
\end{theorem}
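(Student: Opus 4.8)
The plan is to exhibit both conditions in Definition \ref{Def:affine_Anosov} for $\rho$, and the content of Proposition \ref{prop:limitmap} already hands us condition (1): since $\varrho = L(\rho)$ is Anosov with respect to $P^\pm$, the bundle $\mathsf{P}_\rho$ admits an affine Anosov section $\sigma_\rho$ whose linear part $L\circ\sigma_\rho = \sigma_\varrho$ is the linear Anosov section. So the only real task is condition (2): we must produce a H\"older section $\tau$ of $\mathsf{R}_\rho$, differentiable along flow lines, with $|\langle \nabla_\phi\tau \mid \nu\circ\sigma_\rho\rangle| > 0$ everywhere. I would take $\tau$ to be (the section induced by) the neutralised section $f_\rho$ from Proposition \ref{Prop:neutralised_section}: by construction $\nabla_\phi f_\rho(u) = g(u)\,\nu\circ\tilde\sigma_\varrho(u)$ for a H\"older function $g:\cflow\to\bR$, which descends to $\flow$ by equivariance, so $\langle\nabla_\phi\tau\mid\nu\circ\sigma_\rho\rangle = g\cdot\langle\nu\circ\sigma_\varrho\mid\nu\circ\sigma_\varrho\rangle = g$ since $\nu$ is normalized ($\|\nu\|=1$, spacelike). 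Thus condition (2) reduces to showing the scalar function $g$ is nowhere zero on the (compact, connected by Lemma \ref{Lem:Flowspace_connected}) space $\flow$, equivalently that $g$ has constant sign and is bounded away from $0$.

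The heart of the argument, then, is to connect properness of the $\rho(\Gamma)$-action on $\bR^{2n+1}$ to non-vanishing of this neutralised derivative. The natural route is through the Margulis invariant: for $\gamma\in\Gamma$ of infinite order, integrating $\nabla_\phi f_\rho$ along the periodic orbit of $\gamma$ (the axis $\{(\gamma^-,\gamma^+,t)\}$) computes the translation part of $\rho(\gamma)$ in the $\nu\circ\sigma_\varrho$ direction, so $\int_\gamma g = \alpha_\rho(\gamma)/l(\gamma)$ — exactly the Goldman--Labourie--Margulis identity in this setting. If $g$ changed sign on $\flow$, then by connectedness it would vanish somewhere, and one wants to upgrade this to the existence of a flow-invariant probability measure (or a periodic orbit) on which the integral of $g$ is zero or of either sign; the opposite-sign/properness dichotomy of Abels--Margulis--Soifer (the (AMS)-proximality machinery in the appendix) then forces the Margulis invariant spectrum to be of a single sign and bounded away from $0$, contradicting $g$ achieving both signs or approaching $0$. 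Conversely, if the action is proper then no $\rho(\gamma)$ can have vanishing translation component along the neutral direction (a zero Margulis invariant would produce a bounded — hence non-proper — orbit for $\langle\gamma\rangle$), so $g$ cannot vanish on any periodic orbit, and since periodic orbits are dense and $g$ is continuous, $g$ is bounded away from $0$ with a fixed sign.

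Concretely, the steps I would carry out in order: (i) invoke Proposition \ref{prop:limitmap} for condition (1); (ii) fix the neutralised section $f_\rho$ from Proposition \ref{Prop:neutralised_section}, set $\tau$ to be the induced section of $\mathsf{R}_\rho$, and record that $\langle\nabla_\phi\tau\mid\nu\circ\sigma_\rho\rangle = g$ is H\"older on the compact space $\flow$; (iii) establish the integral formula $\int_\gamma g = \alpha_\rho(\gamma)/l(\gamma)$ by integrating $\nabla_\phi f_\rho$ over the $\gamma$-axis and comparing with the defining formula for $\alpha_\rho(\gamma) = \langle \mathsf{u}_\rho(\gamma)\mid\nu\circ\sigma_\varrho(\gamma^-,\gamma^+)\rangle$; (iv) show that properness of $\rho(\Gamma)$ forces $\alpha_\rho(\gamma)\neq 0$ for all infinite-order $\gamma$ with, moreover, a uniform lower bound $|\alpha_\rho(\gamma)|/l(\gamma)\geq\epsilon$ of constant sign — this is where the Abels--Margulis--Soifer properness criterion / (AMS)-proximality from the appendix enters; (v) conclude that $g$ has constant sign and $|g|\geq\epsilon$ on the dense set of periodic orbits, hence everywhere by continuity, so $|\langle\nabla_\phi\tau\mid\nu\circ\sigma_\rho\rangle| > 0$.

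The main obstacle is step (iv)–(v): passing from the bound on Margulis invariants along periodic orbits (indexed by group elements) to a pointwise bound on the continuous function $g$ over all of $\flow$. It is not a priori obvious that $\inf_\gamma |\alpha_\rho(\gamma)|/l(\gamma) > 0$ follows from properness, nor that this infimum over periodic data controls $g$ at non-periodic points; one must use that periodic orbits are dense in $\flow$ together with equicontinuity/uniform control coming from H\"older regularity of $f_\rho$ and compactness, and the quantitative properness estimates of Abels--Margulis--Soifer (properness is equivalent to a uniform escape/displacement bound). I expect this is precisely the place where the (AMS)-proximality appendix is invoked, and where the proof in the paper will do its real work.
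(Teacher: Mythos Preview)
Your steps (i)--(iii) are fine and match the paper's setup. The real problem is the passage (iv)$\to$(v), and it is not just a technical obstacle but a genuine gap: knowing $\int_\gamma g = \alpha_\rho(\gamma)/l(\gamma) \geq \epsilon$ for every periodic orbit does \emph{not} yield $g \geq \epsilon$ (or even $g>0$) pointwise on that orbit, so density of periodic orbits gives you nothing about the pointwise values of $g$. A continuous function can have positive integral over every periodic orbit and still vanish, or dip negative, at many points. Thus your conclusion ``$|g|\geq\epsilon$ on the dense set of periodic orbits, hence everywhere by continuity'' is simply false as stated. (Also, the (AMS)-proximality appendix is used in the paper for the \emph{other} direction, Theorem~\ref{thm:affine_Anosov_proper}; it plays no role here and does not deliver the uniform bound you want in (iv).)

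The paper closes this gap by exploiting a freedom you are not using: condition (2) asks for \emph{some} section, not specifically the neutralised one. Arguing by contradiction, if no section works, then in particular no modification $\tau' = \tau - h\,\nu\circ\sigma_\varrho$ works; since $\langle\nabla_\phi\tau'\mid\nu\circ\sigma_\varrho\rangle = g - \tfrac{d}{dt}h$, this means $g$ is not Liv\v sic cohomologous to any function of fixed sign. By a Liv\v sic-type criterion (Lemma~3 of \cite{GL}) this forces the existence of a $\phi$-invariant probability measure $\mu$ with $\int g\,d\mu = 0$. The paper then translates properness of $\rho(\Gamma)$ on $\bR^{2n+1}$ into properness of the $\bR$-action on $\mathsf{R}_\rho$ (via Lemma~5.2 of \cite{GLM}), and uses the zero-integral measure together with connectedness of $\flow$ to produce, for every $t>0$, a point $p_t$ with $\tau(\phi_t p_t) = \phi_t\tau(p_t)$, so $\phi_t\tau(\flow)\cap\tau(\flow)\neq\emptyset$ for all $t$ --- contradicting properness. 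The two ideas you are missing, then, are (a) the Liv\v sic-cohomology freedom to replace $g$ by any cohomologous function, reducing the question to integrals against invariant measures rather than pointwise values, and (b) the ``switching'' of proper actions that turns properness of $\Gamma$ on $\bR^{2n+1}$ into properness of $\bR$ on $\mathsf{R}_\rho$.
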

\begin{proof}
As $\Gamma$ acts properly on $\mathbb{R}^{2n+1}$ we get that $\Gamma$ acts properly on $\partial_\infty\Gamma^{(2)}\times\mathbb{R}^{2n+1}$. Hence $\Gamma$ acts properly on $(\cflow\times\mathbb{R}^{2n+1})/\mathbb{R}$ where the action of $\mathbb{R}$ on $\mathbb{R}^{2n+1}$ is trivial. Now using Lemma 5.2 of \cite{GLM} we get that $\mathbb{R}$ acts properly on 
\[\Gamma\backslash(\cflow\times\mathbb{R}^{2n+1})=\mathsf{R}_\rho\] 
where $\Gamma$ acts on $\mathbb{R}^{2n+1}$ through the representation $\rho$. 

Now assume that $\varrho$ is Anosov with respect to $P^\pm$ but $\rho$ is not affine Anosov with respect to $P^\pm_\mathrm{aff}$. By Proposition \ref{prop:limitmap}, $\rho$ admits an affine Anosov section, hence the second part of Definition \ref{Def:affine_Anosov} must fail: There can be no H\"older section $\tau$ of $\mathsf{R}_\rho$ satisfying
\begin{equation}    \label{Eq:positive_section}
    |b( \nabla_\phi\tau\mid \nu\circ \sigma_\varrho )| > 0.
\end{equation}  
We observe that, by Proposition \ref{Prop:neutralised_section}, there exists a section $\tau$ such that its lift $\tilde{\tau}$ is neutralised. Moreover, Lemma 3 of \cite{GL} together with \eqref{Eq:positive_section} implies that there exists a $\phi$-invariant probability measure $\mu_\tau$ on $\flow$ such that
\[\int b( \nabla_\phi\tau\mid \nu\circ\sigma_\varrho) d\mu_\tau = 0. \] 
Indeed, if not then for any such $\tau$ and for all $\phi$-invariant probability measure $\mu$ on $\flow$ we have
\[I_\tau(\mu):=\int b( \nabla_\phi\tau\mid \nu\circ\sigma_\varrho) d\mu \neq 0 .\]
Moreover, if for some $\phi$-invariant probability measures $\mu_1$ and $\mu_2$ on $\flow$ we have $I_\tau(\mu_1)>0$ and $I_\tau(\mu_2)<0$ then $\mu_0:=\frac{I_\tau(\mu_1)\mu_2-I_\tau(\mu_2)\mu_1}{I_\tau(\mu_1)-I_\tau(\mu_2)}$ is a probability measure for which $I_\tau(\mu_0)=0$. Hence, either $I_\tau(\mu)>0$ for all $\phi$-invariant probability measure $\mu$ on $\flow$ or $I_\tau(\mu)<0$ for all $\phi$-invariant probability measure $\mu$ on $\flow$. In the first case $b( \nabla_\phi\tau\mid \nu\circ\sigma_\varrho)$ is Liv\v sic cohomologous to some positive function $f$ and in the second case it is Liv\v sic cohomologous to some negative function $f$. Now it follows from the definition of Liv\v sic cohomology that there exist a function $g$ such that
\[b( \nabla_\phi\tau\mid \nu\circ\sigma_\varrho) - f= \frac{d}{dt} g.\]
We consider $\tau^\prime:=\tau-g\nu\circ\sigma_\varrho$ and observe that it is a H\"older section of $\mathsf{R}_\rho$ which is neutralised and satisfies
\[b( \nabla_\phi\tau^\prime\mid \nu\circ \sigma_\varrho )=f\]
and $|f|>0$ contradicting our hypothesis.

Let $t>0$ and $p\in\flow$. We define
\[\tau_t(p):= \int_0^t b( \nabla_\phi\tau\mid \nu\circ\sigma_\varrho) (\phi_sp) ds.\]
Hence $\int \tau_t d\mu_\tau =0$ by Fubini's theorem.\\
Moreover, as $\flow$ is connected by Lemma \ref{Lem:Flowspace_connected}, we get that for all $t>0$ there exists $p^\tau_t\in\flow$ such that
\[\tau_t(p^\tau_t)=0.\]
Let $q^\tau_t\in\cflow$ be such that $\pi(q^\tau_t)=p^\tau_t$. As $\tau$ admits neutralised lifts, let $\tilde{\tau}:\cflow\rightarrow \cflow\times \bR^{2n+1}$ be a lift of $\tau$ with $\tilde{\tau}$ neutralised and let $\tilde{\tau}_t:\cflow\to\bR$ be a lift of $\tau_t$. We recall that, if $\tilde{\tau}(x,y,s)=(x,y,s,v)$, then 
\[ \phi_t\tilde{\tau}(x,y,s)= \phi_t(x,y,s,v)=(x,y,s+t,v). \] 
As $\tilde{\tau}$ is neutralised, for all $t>0$ we have
\[\tilde{\tau}(\phi_t q^\tau_t) = \phi_t\tilde{\tau}(q^\tau_t) + \tilde{\tau}_t(q^\tau_t)\nu\circ\sigma_\varrho = \phi_t\tilde{\tau}(q^\tau_t).\] 
Now passing to the quotient we get that for all $t>0$, $\tau(\phi_tp^\tau_t)=\phi_t\tau(p^\tau_t)$, and hence \[\tau(\phi_t p^\tau_t)\in\phi_t\tau(\flow)\cap\tau(\flow)\]
for the compact set $\tau(\flow)\subset\mathsf{R}_\rho$.
Therefore, the $\bR$ action on $\Gamma\backslash(\cflow\times\mathbb{R}^{2n+1})=\mathsf{R}_\rho$ is not proper, a contradiction.
\end{proof}

\begin{lemma}   \label{Lem:Margulis_invariant_length}
Let $\alpha_\rho(\gamma)$ be the Margulis invariant and let $t(\gamma)$ be the period of the closed orbit corresponding to $\gamma$ in $\flow$. Then for all $\tau:\flow\rightarrow\mathsf{R}_\rho$ we have
\[\alpha_\rho(\gamma)=t(\gamma)\int b( \nabla_\phi\tau\mid \nu\circ\sigma_\varrho ) d\mu_\gamma\]
where $\mu_\gamma$ denotes the geodesic current on $\flow$ corresponding to $\gamma$.
\end{lemma}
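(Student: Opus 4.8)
The plan is to reduce the identity to the case where $\tau$ is a neutralised section and then to a direct integration along the periodic orbit of $\gamma$. First I would observe that the claimed identity is independent of the choice of $\tau$: given two sections $\tau,\tau'$ of $\mathsf{R}_\rho$, their difference is a section of $\mathsf{R}_\rho$ whose lift is $\Gamma$-equivariant, and $\nabla_\phi(\tilde\tau-\tilde\tau')$ is a $\varrho$-equivariant section of $\mathsf{R}_\varrho$. Writing this difference in the splitting $\mathsf{R}_\varrho=\mathsf{V}^+\oplus\mathsf{L}\oplus\mathsf{V}^-$ from \eqref{Eq:splitting_ASec_equivar}, the components in $\mathsf{V}^\pm$ integrate to zero against the invariant measure $\mu_\gamma$ by Corollary \ref{Cor:Anosov_contraction} (a contracted or dilated bundle cannot support a nonzero average of a full derivative over a periodic orbit — this is the usual Liv\v{s}ic-type vanishing, also used in the proof of Theorem \ref{thm:proper_affine_Anosov}), while the $\mathsf{L}$-component is a total derivative along the flow line and hence also integrates to zero against $\mu_\gamma$. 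So it suffices to verify the formula for one convenient choice of $\tau$.

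Next I would pick $\tau$ so that its lift $\tilde\tau=f_\rho$ is a neutralised section, which exists by Proposition \ref{Prop:neutralised_section}. For such a section, $\nabla_\phi\tilde\tau(x,y,t)=\lambda(x,y,t)\,\nu\circ\tilde\sigma_\varrho(x,y,t)$ for some H\"older function $\lambda$, so that $\langle\nabla_\phi\tau\mid\nu\circ\sigma_\varrho\rangle=\lambda$ along $\flow$ (using that $\nu\circ\sigma_\varrho$ is a unit spacelike vector, $\langle\nu,\nu\rangle=1$). Then
\[
\int\langle\nabla_\phi\tau\mid\nu\circ\sigma_\varrho\rangle\,d\mu_\gamma=\frac{1}{l(\gamma)}\int_0^{l(\gamma)}\lambda(\phi_s q)\,ds,
\]
where $q=(\gamma^-,\gamma^+,0)$ lies on the axis of $\gamma$ and $s\mapsto\phi_s q$ parametrizes the periodic orbit of period $l(\gamma)$ (here I use that $\mu_\gamma$ is by definition the normalized arc-length measure on this periodic orbit).

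The final step is to identify the integral $\int_0^{l(\gamma)}\lambda(\phi_s q)\,ds$ with the Margulis invariant $\alpha_\rho(\gamma)=\langle\mathsf{u}_\rho(\gamma)\mid\nu\circ\sigma_\varrho(\gamma^-,\gamma^+)\rangle$. To do this I would integrate $\nabla_\phi\tilde\tau=\lambda\,\nu\circ\tilde\sigma_\varrho$ from $0$ to $l(\gamma)$ along the axis: since $\nu\circ\tilde\sigma_\varrho$ is constant along the flow line (the Anosov section is parallel along flow lines), we get $\tilde\tau(\phi_{l(\gamma)}q)-\tilde\tau(q)=\bigl(\int_0^{l(\gamma)}\lambda(\phi_s q)\,ds\bigr)\,\nu\circ\sigma_\varrho(\gamma^-,\gamma^+)$. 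On the other hand $\phi_{l(\gamma)}q=\gamma\cdot q$ on the axis, so by $\rho$-equivariance $\tilde\tau(\phi_{l(\gamma)}q)=\rho(\gamma)\tilde\tau(q)=\varrho(\gamma)\tilde\tau(q)+\mathsf{u}_\rho(\gamma)$ where $\mathsf{u}_\rho(\gamma)$ denotes the translational part suitably normalized; projecting onto the neutral direction (equivalently pairing with $\nu\circ\sigma_\varrho(\gamma^-,\gamma^+)$, and using that $\varrho(\gamma)$ fixes the neutral vector and preserves $b$, so $\langle(\varrho(\gamma)-\mathrm{Id})\tilde\tau(q)\mid\nu\rangle=0$) yields exactly $\int_0^{l(\gamma)}\lambda(\phi_s q)\,ds=\langle\mathsf{u}_\rho(\gamma)\mid\nu\circ\sigma_\varrho(\gamma^-,\gamma^+)\rangle=\alpha_\rho(\gamma)$. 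Combining with the previous display gives the claim.

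The main obstacle I anticipate is the bookkeeping in the last step: making precise the definition of $\mathsf{u}_\rho(\gamma)$ as the component of the translational part of $\rho(\gamma)$ along the neutral direction and checking that the cross terms $\langle(\varrho(\gamma)-\mathrm{Id})\tilde\tau(q)\mid\nu\circ\sigma_\varrho\rangle$ genuinely vanish (this uses that $\varrho(\gamma)^t$ fixes $\nu\circ\sigma_\varrho(\gamma^-,\gamma^+)$ since it stabilizes both $V_{\gamma^-,\gamma^+}$ and $W_{\gamma^-,\gamma^+}$ and acts by an orientation- and form-preserving transformation). The independence-of-$\tau$ argument in the first paragraph is the other point requiring care, but it runs exactly parallel to the Liv\v{s}ic cohomology manipulation already carried out in the proof of Theorem \ref{thm:proper_affine_Anosov}.
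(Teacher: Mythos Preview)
Your argument is correct, but it takes an unnecessary detour compared with the paper. The paper proves the identity directly for an arbitrary section $\tau$ in three lines: picking any $p=(\gamma^-,\gamma^+,s)$ on the axis and writing $f_\tau$ for the lifted section, one has
\[
\alpha_\rho(\gamma)=\langle \rho(\gamma)f_\tau(p)-f_\tau(p)\mid \nu\circ\tilde\sigma_\varrho(p)\rangle
=\langle f_\tau(\gamma p)-f_\tau(p)\mid \nu\circ\tilde\sigma_\varrho(p)\rangle
=\int_0^{l(\gamma)}\langle\nabla_\phi f_\tau\mid\nu\circ\tilde\sigma_\varrho\rangle(\phi_t p)\,dt,
\]
using that $\nu\circ\tilde\sigma_\varrho$ is constant along the flow line. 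The first equality is exactly the vanishing $\langle(\varrho(\gamma)-\mathrm{Id})v\mid\nu\rangle=0$ that you yourself isolate in your final step; once you have it, there is no need to restrict to neutralised sections or to argue independence of $\tau$ separately.

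One small point in your first paragraph: the appeal to Corollary~\ref{Cor:Anosov_contraction} for the $\mathsf V^\pm$-components is a red herring. Since $\nu\circ\sigma_\varrho$ spans $\mathsf L$ and $b(\mathsf V^\pm,\mathsf L)=0$, those components never contribute to the pairing $\langle\nabla_\phi(\tau-\tau')\mid\nu\circ\sigma_\varrho\rangle$ in the first place; only the $\mathsf L$-component matters, and that one is a total derivative as you say. So the independence argument is simpler than you make it---and, as above, ultimately redundant.
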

\begin{proof}
Let $p=(\gamma^-,\gamma^+,s)\in\cflow$, let $\tilde{\sigma}$ be the lift of $\sigma$ and let $f_\tau: \cflow \rightarrow \bR^{2n+1}$ be the map corresponding to the lift of $\tau$. We know that 
\[\alpha_\rho(\gamma)= b( \rho(\gamma)f_\tau(p)-f_\tau(p)\mid \nu\circ\tilde{\sigma}_\varrho(p)).\] 
Moreover,
\begin{align*}
    b( \rho(\gamma)f_\tau(p)-f_\tau(p)\mid \nu\circ\tilde{\sigma}_\varrho(p))&=b( f_\tau(\gamma p)-f_\tau(p)\mid \nu\circ\tilde{\sigma}_\varrho(p))\\
    &=\int_0^{t(\gamma)} b( \nabla_\phi f_\tau\mid \nu\circ\tilde{\sigma}_\varrho) (\phi_tp) dt \\
    &=t(\gamma)\int b( \nabla_\phi\tau\mid \nu\circ\sigma_\varrho) d\mu_\gamma 
\end{align*}
and the result follows.
\end{proof}

\begin{theorem} \label{thm:affine_Anosov_proper}
Let $\Gamma$ be a non--elementary word hyperbolic group and let $\rho:\Gamma\rightarrow G\ltimes\bR^{2n+1}$ be an injective homomorphism which is affine Anosov with respect to $P^\pm_{\mathrm{aff}}$. Then $\rho(\Gamma)$ acts properly on $\bR^{2n+1}$ and the linear part $\varrho$ is Anosov with respect to $P^\pm$.
\end{theorem}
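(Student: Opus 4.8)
The plan is to prove the two assertions separately. The claim that $\varrho$ is Anosov with respect to $P^\pm$ is the easy half: given the affine Anosov section $\sigma$ of $\mathsf{P}_\rho$, the section $L\circ\sigma$ of $\mathsf{P}_\varrho$ is automatically parallel along flow lines (since $\sigma$ is, and $L$ is a flow-equivariant bundle map), and by the splitting \eqref{Eq:affine_bundle_splitting} the bundle $\sigma^*X^\pm_\mathrm{aff} = (L\circ\sigma)^*X^\pm \oplus \mathsf{V}^\mp$ contains $(L\circ\sigma)^*X^\pm$ as a flow-invariant subbundle. Since $\phi_t$ contracts $\sigma^*X^+_\mathrm{aff}$ and dilates $\sigma^*X^-_\mathrm{aff}$, it must contract $(L\circ\sigma)^*X^+$ and dilate $(L\circ\sigma)^*X^-$, so $L\circ\sigma$ is a (linear) Anosov section and $\varrho$ is $P^\pm$-Anosov. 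Note that this step requires $\Gamma$ non-elementary only to ensure $\flow$ is non-empty and the notion is non-vacuous; the real content is in the properness statement.

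For properness, I would argue by contradiction, following the structure of the commented-out ``fixed version'' in the excerpt. Suppose $\rho(\Gamma)$ does not act properly on $\bR^{2n+1}$; then there are $\gamma_m\to\infty$ in $\Gamma$ and $x_m\to x$ with $\rho(\gamma_m)x_m\to y$. The first step is a reduction to infinite-order elements with well-separated attracting/repelling points: using (AMS)-proximality of $\varrho(\Gamma)$ (from \cite[Theorem 1.7]{GW2}, via the appendix), after multiplying the $\gamma_m$ by a fixed finite set of elements and passing to a subsequence, we may assume each $\gamma_m$ has infinite order, that $\rho(\gamma_m)$ is proximal, and — after a further multiplication by a fixed $\eta\in\Gamma$ as in Case 2 of the sketch — that $\gamma_m^+\to a^+$ and $\gamma_m^-\to a^-$ with $a^+\neq a^-$; these modifications preserve $\rho(\gamma_m)\to\infty$ and (up to a fixed translate) the convergence of $\rho(\gamma_m)x_m$. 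The second step is to use the affine Anosov section to control the dynamics of $\rho(\gamma_m)$: each proximal $\rho(\gamma_m)$ fixes a unique affine line $\ell_m$ (the affine analogue of the axis, obtained from $\sigma_\rho(\gamma_m^-,\gamma_m^+)$ together with the neutral direction), and the affine Anosov contraction/dilation guarantees $\ell_m\to\ell$ for some fixed line $\ell$, with the associated projections $\pi_m\to\pi$ converging. Projecting the bounded sequences $x_m$ and $\rho(\gamma_m)x_m$ to these lines gives bounded sequences $\pi_m x_m$ and $\pi_m\rho(\gamma_m)x_m = \rho(\gamma_m)\pi_m x_m = \pi_m x_m + \alpha_\rho(\gamma_m)\,\nu\circ\sigma_\rho(\gamma_m^-,\gamma_m^+)$. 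The final step is the contradiction: by Lemma \ref{Lem:Margulis_invariant_length}, $\alpha_\rho(\gamma_m) = l(\gamma_m)\int\langle\nabla_\phi\tau\mid\nu\circ\sigma_\varrho\rangle\,d\mu_{\gamma_m}$; since $\flow$ is connected (Lemma \ref{Lem:Flowspace_connected}) the argument producing a measure with zero integral does not apply here — instead, condition (2) of Definition \ref{Def:affine_Anosov} gives $|\langle\nabla_\phi\tau\mid\nu\circ\sigma_\varrho\rangle|>0$, hence (being continuous on compact $\flow$) bounded away from $0$ in absolute value, so $|\alpha_\rho(\gamma_m)|\geq c\,l(\gamma_m)$; and $\gamma_m\to\infty$ forces $l(\gamma_m)\to\infty$ (using properness and cocompactness of the $\Gamma$-action on $\cflow$). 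Then the right-hand side of the displayed identity goes to infinity while the left-hand side stays bounded — a contradiction.

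The main obstacle I anticipate is making the convergence $\ell_m\to\ell$ of the fixed affine lines rigorous and uniform: one must argue that affine Anosov-ness implies not merely that the linear parts $\mathsf{V}^\pm_m$ converge (this follows from the linear Anosov limit maps and north–south dynamics on $\bdry$, together with $\gamma_m^\pm\to a^\pm$), but that the affine basepoints of the $\ell_m$ stay in a compact set. This is exactly what the affine limit map / affine Anosov section is designed to provide, but one needs to track the estimate carefully: the contraction on $\sigma^*X^+_\mathrm{aff}$ controls the transverse-translation component $(\mathsf{V}^-)$ of how $\ell_m$ sits in $\bR^{2n+1}$, uniformly in $m$ because of compactness of $\flow$ and continuity of $\sigma_\rho$. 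A secondary technical point is justifying that $\gamma_m\to\infty$ in $\Gamma$ implies $l(\gamma_m)\to\infty$; this uses that $\Gamma$ acts properly and cocompactly on $\cflow$, so the translation lengths of an escaping sequence of infinite-order elements must escape (after the proximality reduction has been carried out, so that $l(\gamma_m)$ is genuinely realized on an axis).
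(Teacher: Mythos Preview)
Your argument is correct and follows the same overall route as the paper: the linear Anosov claim via the splitting \eqref{Eq:affine_bundle_splitting}, then properness by contradiction, using (AMS)-proximality to reduce to infinite-order $\gamma_m$ with $\gamma_m^\pm\to a^\pm$ distinct and $l(\gamma_m)\to\infty$, and finally Lemma~\ref{Lem:Margulis_invariant_length} together with condition~(2) of Definition~\ref{Def:affine_Anosov} to force $|\alpha_\rho(\gamma_m)|\to\infty$.

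The one place you diverge is in showing that $\alpha_\rho(\gamma_m)$ also stays bounded. You project onto the $\rho(\gamma_m)$-invariant affine line $\ell_m$ and need $\ell_m\to\ell$; the paper instead uses the observation that for \emph{any} point $v\in\bR^{2n+1}$ one has $\alpha_\rho(\gamma_m)=\langle\rho(\gamma_m)v-v\mid\nu\circ\widetilde\sigma_\varrho(p_m)\rangle$, because $\varrho(\gamma_m)v-v\in V^+_m\oplus V^-_m$ is orthogonal to the neutral direction. Taking $v=x_m$ then gives $\alpha_\rho(\gamma_m)\to\langle y-x\mid\nu\circ\widetilde\sigma_\varrho(p)\rangle$ in one line, sidestepping your anticipated obstacle entirely. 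That obstacle is in any case milder than you fear: once you choose $p_m\in\mathrm{axis}(\gamma_m)$ converging in $\cflow$ (possible since $a^+\neq a^-$), continuity of the equivariant map $\widetilde\sigma_\rho$ gives $\widetilde\sigma_\rho(p_m)\to\widetilde\sigma_\rho(p)$ and hence $\ell_m\to\ell$ immediately; no contraction estimates are needed. A minor streamlining: the paper obtains $a^+\neq a^-$ directly from $(r,\epsilon)$-proximality (the attracting and repelling isotropics of $\varrho(\gamma_m')$ stay at distance $\geq r$, so their limits remain transverse), making the separate multiplication by an element $\eta$ unnecessary.
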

\begin{proof} 
Let $\sigma_\rho$ be the affine Anosov section of $\rho$. Its linear part
\[ \sigma_\varrho = L\circ\sigma_\rho \]
is a section of the bundle $\mathsf P_\varrho$ that is parallel along flow lines. Since the bundles $\sigma_\rho^*X^\pm_\mathrm{aff}$ are contracted/dilated by the flow $\phi_t$, so are $\sigma_\varrho^*X^\pm$ (see \eqref{Eq:affine_bundle_splitting}). Thus $\varrho$ is Anosov with respect to $P^\pm$.

Now assume that $\rho(\Gamma)$ does not act properly on $\bR^{2n+1}$. Then, there exists a sequence $\gamma_m\in\Gamma$ with $\rho(\gamma_m)\to\infty$ and a converging sequence $x_m\to x\in \bR^{2n+1}$ with $\rho(\gamma_m) x_m\rightarrow y\in \bR^{2n+1}$.

First of all, we show that we may assume without loss of generality that
\begin{itemize}
    \item $\gamma_m$ has infinite order,
    \item the endpoints $\gamma_m^\pm\in\bdry$ of the axis of $\gamma_m$ have distinct limits $a^\pm$,
    \item $l(\gamma_m) \to \infty$.
\end{itemize}
By \cite[Theorem 1.7]{GW2}, the image $\varrho(\Gamma)$ is (AMS)--proximal (for a short discussion see \autoref{sec:AMS-proximal}). This implies that for any $r>0, \epsilon>0$, there exists a finite set $S\subset \varrho(\Gamma)$ with the following property: For every $m$, there is an element $s(m)\in S$ such that $s(m)\varrho(\gamma_m)$ is $(r,\epsilon)$--proximal. After taking a subsequence, we may assume that $s(m) = s$ is constant. Let $s'\in\Gamma$ be a preimage of $s$ and set $\gamma_m' = s'\gamma_m$. Then $\gamma_m', \varrho(\gamma_m'), \rho(\gamma_m')$ all have infinite order, we still have $\rho(\gamma_m') \to \infty$ and 
\[\rho(\gamma_m')x_m = \rho(s')\rho(\gamma_m)x_m \to \rho(s')y.\] 
Taking a subsequence again, we can assume that $\gamma_m'^\pm$ converge to $a^\pm$. By $(r,\epsilon)$--proximality, the attracting and repelling maximal isotropics of $\varrho(\gamma_m')$ have transverse limits, so we must have $a^+ \neq a^-$. Therefore, $\mathrm{axis}(\gamma_m')$ converges as a subset of $\cflow$ 
\footnote{with respect to the pointed Hausdorff topology on the set of closed subsets of the metric space $\cflow$. A basis of this topology is given by sets of the form 
\[ N_{K,\epsilon}(A) = \{ B \subset \cflow \ \text{closed} \ \mid d_H(B\cap K,A\cap K) < \epsilon \}, \]
where $K\subset\cflow$ is a compact set, $A\subset\cflow$ is a closed set and $d_H$ is the Hausdorff metric on closed subsets of the compact set $K$. }
, which implies $l(\gamma_m') \to \infty$ since $\Gamma$ acts properly on $\cflow$.

We assume from now on that $\gamma_m$ has the properties listed above. Let $\widetilde\sigma_\varrho\colon\cflow\to\mathcal X$ be the $\varrho$--equivariant map corresponding to $\sigma_\varrho$. Since the subsets $\mathrm{axis}(\gamma_m) \subset \cflow$ converge, we can pick a convergent sequence $p_m \in \mathrm{axis}(\gamma_m), \ p_m \to p$. It follows that
\begin{align*}
    \alpha_\rho(\gamma_m) = & b( \rho(\gamma_m)x_m - x_m \mid \nu\circ\widetilde\sigma_\varrho(p_m) )   
    \xrightarrow{m\to\infty} b(  y- x \mid \nu\circ\widetilde\sigma_\varrho(p) ),
\end{align*} 
so in particular $\alpha_\rho(\gamma_m)$ stays bounded. We now show that this is a contradiction, finishing the proof.

Because $\rho$ is affine Anosov, there is a H\"older section $\tau\colon\flow\to\mathsf R_\rho$ satisfying
\[ b( \nabla_\phi \tau \mid \nu_\varrho ) > 0. \]
Since $\flow$ is compact, $b( \nabla_\phi \tau \mid \nu_\varrho )$ is bounded from below by a constant $M>0$, thus 
\[ \int b( \nabla_\phi \tau \mid \nu_\varrho ) \, \mathrm d \mu_{\gamma_m} \geq M, \]
where $\mu_{\gamma_m}$ is the geodesic current corresponding to $\gamma_m$. Therefore, the fact that every orbit of $\phi$ gives a quasi-isometric embedding and Lemma \ref{Lem:Margulis_invariant_length} imply that $\alpha_\rho(\gamma_m) \to \infty$.
\end{proof}

\section{Appendix}

%
%
%
\subsection{(AMS)-Proximality}    \label{sec:AMS-proximal}
Let $G$ be a semisimple Lie group and $(P^+,P^-)$ a pair of opposite parabolic subgroups of $G$. An element $g \in G$ is called \textit{proximal} relative to $G/P^+$ if $g$ has two transverse fixed points $x^\pm\in G/P^\pm$ and the following holds:
\[ \lim_{n\to\infty}\gamma^nx = x^+  \quad \text{for all $x\in G/P^+$ transverse to $x^-$.} \]
Moreover, a subgroup $H<G$ containing a proximal element is also called \textit{proximal}.

We now turn to a quantitative version of proximality. For any $x^-\in G/P^-$, we define
\[ \nt(x^-) := \{ x\in G/P^+ \mid x \ \text{not transverse to} \ x^- \}.\]
Let $d$ be a Riemannian distance on $G/P^+$ and let $x^\pm\in G/P^\pm$. We fix constants $r,\epsilon > 0$ and consider the neighborhoods
\[ N_\epsilon(x^+) = \{ x \in G/P^+ \mid d(x,x^+) < \epsilon \} \]
and
\[ N_\epsilon(\nt(x^-)) = \{ x \in G/P^+ \mid d(x,\nt(x^-)) < \epsilon \}. \]
An element $g\in G$ is called $(r,\epsilon)$-\textit{proximal} relative to $G/P^+$ if it has two transverse fixed points $x^\pm \in G/P^\pm$ satisfying 
\[ d(x^+,\nt(x^-)) \geq r \]
and the following holds: 
\[ g(N_\epsilon(\nt(x^-))^c) \subset N_\epsilon(x^+). \]

A subgroup $H$ of $G$ is called \textit{(AMS)-proximal} relative to $G/P^+$ if there exist constants $r>0$ and $\epsilon_0>0$ such that for any $\epsilon<\epsilon_0$, there exists a finite set $S = S(r,\epsilon) \subset H$ satisfying the following:
For any $g \in H$, there exists $s\in S$ such that $sg$ is $(r,\epsilon)$-\textit{proximal}. 

Finally, a representation $\varrho:\Gamma\rightarrow G$ is called \textit{(AMS)-proximal} if $\mathsf{ker}(\varrho)$ is finite and $\varrho(\Gamma)$ is (AMS)-proximal.

This definition was introduced by Abels--Margulis--Soifer in \cite{AMSproximal}, where they proved a more general version of the following result:
\begin{theorem}[\cite{AMSproximal}, Theorem 4.1]
Let $H<\SL(n)$ be a strongly irreducible subgroup, i.e. all finite index subgroups of $H$ act irreducibly on $\bR^n$. Assume that $H$ contains a proximal element. Then $H$ is (AMS)-proximal relative to $\bR P^{n-1}$.
\end{theorem}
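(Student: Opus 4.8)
The plan is to characterize $(r,\epsilon)$--proximality through the Cartan (singular value) decomposition, to manufacture a single, uniformly strongly contracting element out of the given proximal element, and then to use strong irreducibility to put an arbitrary $g\in H$ into general position relative to that contractor. \emph{Step 1 (Cartan criterion for proximality).} For $g\in\SL(n)$ with singular value decomposition $g=k_g\,\mathrm{diag}(\sigma_1,\dots,\sigma_n)\,\ell_g$, $\sigma_1\geq\dots\geq\sigma_n>0$, write $\xi(g)=[k_g e_1]\in\bR P^{n-1}$ for the Cartan attracting point and $H^-(g)=\ell_g^{-1}\langle e_2,\dots,e_n\rangle$ for the Cartan repelling hyperplane. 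I would first prove two elementary lemmas. The contraction lemma: there is $\delta(\epsilon)\to 0$ as $\epsilon\to 0$ such that if $\sigma_1/\sigma_2\geq 1/\epsilon^2$ then $g\big(\bR P^{n-1}\setminus N_\epsilon(H^-(g))\big)\subset N_{\delta(\epsilon)}(\xi(g))$. The proximality lemma: if moreover $d(\xi(g),H^-(g))\geq r$, then for small $\epsilon$ (depending on $r$) the map $g$ has an attracting fixed point within $\delta(\epsilon)$ of $\xi(g)$ and a repelling hyperplane within $\delta(\epsilon)$ of $H^-(g)$, so that $g$ is $(r/2,\epsilon')$--proximal with $\epsilon'$ controlled by $\epsilon$ and $r$. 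This reduces the theorem to producing a finite $S$ such that, for every $g$, some $sg$ has a large singular value gap together with $r$--transverse Cartan data.

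\emph{Step 2 (a uniform contractor and reduction to one general--position condition).} Let $h\in H$ be the proximal element, with transverse attracting point $x^+_h$ and repelling hyperplane $W^-_h$. Fix $p$ once and for all so large that $h^p$ has an enormous gap and $\xi(h^p),H^-(h^p)$ are as close as we like to $x^+_h,W^-_h$; set $\xi_0:=x^+_h$ and $W:=W^-_h$. The point is that for any $b\in\SL(n)$ the product $h^p b$ inherits the strong contraction of $h^p$: it maps $\big(b^{-1}N_\epsilon(W)\big)^c$ into $N_{\delta}(\xi_0)$, so it has a large gap, $\xi(h^p b)\approx\xi_0$ and $H^-(h^p b)\approx b^{-1}W$. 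By the proximality lemma, $h^pb$ is uniformly $(r/2,\epsilon')$--proximal as soon as
\[ d\big(\xi_0,\ b^{-1}W\big)\geq r. \]
Taking $s=h^p f$ and $b=fg$, the entire theorem reduces to the following uniform statement: there exist a finite $F\subset H$ and $r>0$ such that for every $g\in H$ there is $f\in F$ with $d\big(\xi_0,(fg)^{-1}W\big)\geq r$.

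\emph{Step 3 (strong irreducibility as the general--position engine).} The clean instance of this is the ``target side'' escape condition $d(fg\,\xi_0,\,W)\geq r$. Here irreducibility enters: the subspace $\bigcap_{f\in H}f^{-1}W$ is $H$--invariant and proper, hence trivial, so finitely many $f_1,\dots,f_k\in H$ already satisfy $\bigcap_i f_i^{-1}W=\{0\}$. The function $y\mapsto\max_i d(f_i y,W)$ is then continuous and strictly positive on the compact space $\bR P^{n-1}$, so it attains a positive minimum $r$; applying this with $y=g\,\xi_0$ gives, for every $g$, some $f_i$ with $d(f_i g\,\xi_0,W)\geq r$. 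This produces the finite set $F=\{f_i\}$ and a constant $r$ independent of $g$.

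\emph{Step 4 (the main obstacle, and assembly).} The difficulty is that the condition forced by the proximality lemma is the \emph{source side} distance $d(\xi_0,(fg)^{-1}W)$, not the target side distance $d(fg\,\xi_0,W)$; the two differ by the conformal distortion of $fg$ at $\xi_0$, which is not uniformly bounded as $g$ varies and is controlled precisely by how close $\xi_0$ lies to the Cartan repelling hyperplane $H^-(fg)$. The heart of the proof is therefore a \emph{simultaneous} general--position lemma: a single finite $F$ and constant $r$ such that for every $g$ some $f\in F$ makes $\xi_0$ both $r$--far from $(fg)^{-1}W$ \emph{and} $r$--far from $H^-(fg)$ (equivalently, $\xi_0$ stays $r$--transverse to the repelling data of $fg$ while $fg\,\xi_0$ stays $r$--far from $W$). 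I expect this to be the main obstacle. It is handled by running the escape argument of Step 3 simultaneously on $\bR P^{n-1}$ and on its dual $(\bR P^{n-1})^{*}$, where hyperplanes become points and $H^-(fg)$ becomes the Cartan attracting point of the dual element; it is here that \emph{strong} irreducibility -- irreducibility of every finite index subgroup, hence of the $H$--action on the relevant products of projective spaces -- is needed to guarantee that no proper subvariety of configurations traps the orbit and that $r$ can be taken independent of $g$. Once this lemma is in place, one sets $S=\{h^p f:f\in F\}$ (enlarging $F$ as dictated by the simultaneous lemma); Steps 1--2 then show that for each $g$ the corresponding $sg$ is $(r/2,\epsilon')$--proximal with constants depending only on $h$, $p$, $F$ and $r$. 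Choosing $\epsilon_0$ below $\epsilon'$ and observing that the construction persists for every smaller $\epsilon$ (by increasing $p$) yields the required uniform finite sets $S(r,\epsilon)$, so $H$ is (AMS)--proximal relative to $\bR P^{n-1}$.
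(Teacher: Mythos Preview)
The paper does not prove this theorem; it is quoted without proof as Theorem~4.1 of Abels--Margulis--Soifer \cite{AMSproximal} and used as a black box. There is therefore no ``paper's own proof'' to compare against.

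Your outline follows the genuine AMS strategy and correctly isolates the heart of the matter in Step~4. One point deserves care, however. In Step~2 you assert that for arbitrary $b\in\SL(n)$ the product $h^pb$ ``inherits the strong contraction of $h^p$'' with $H^-(h^pb)\approx b^{-1}W$. What is immediate is that $h^pb$ maps the complement of $b^{-1}N_\delta(W)$ into $N_\delta(\xi_0)$; but $b^{-1}N_\delta(W)$ is \emph{not} in general close to $N_\epsilon(b^{-1}W)$ for any fixed $\epsilon$, because $b$ may distort the $\delta$--neighborhood of $W$ arbitrarily badly. So the singular value gap of $h^pb$, and the identification of its Cartan repelling hyperplane, are not automatic from the contraction of $h^p$ alone. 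This is precisely the difficulty you flag in Step~4, and it already bites in Step~2 rather than only at the transversality stage.

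The actual AMS argument handles this by working directly with the $KAK$ decomposition of $g$ and choosing $f$ so that the top Cartan direction $k_g e_1$ is moved away from $W$ by $f$; this guarantees that $h^p$ genuinely amplifies the top singular value of $fg$ and yields a controlled gap for $h^pfg$. Your ``simultaneous escape on $\bR P^{n-1}$ and its dual'' is the right picture, but the quantities that must be put in general position are the Cartan data $\xi(g)=[k_ge_1]$ and $H^-(g)$ of the \emph{given} $g$, not the a~priori unknown data of $h^pfg$. Once reformulated this way, the finite--set lemma you describe in Step~3 (applied to points and to hyperplanes) is exactly what strong irreducibility provides, and the assembly goes through.
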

Subsequently, Guichard--Wienhard used the previous theorem to prove (AMS)-proximality for Anosov representations, which we use in this paper:
\begin{theorem}[\cite{GW2}, Theorem 1.7]
Let $\Gamma$ be a finitely generated word hyperbolic group and $\varrho: \Gamma \to G$ Anosov with respect to $P^\pm$. Then $\varrho$ is (AMS)-proximal with respect to $G/P^\pm$.
\end{theorem}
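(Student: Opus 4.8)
The plan is to extract from the Anosov section the associated boundary maps, deduce proximality of every infinite-order element, and then import the quantitative, uniform statement from the already-cited Abels--Margulis--Soifer result (Theorem 4.1) after pushing the whole picture into a projective space by means of a proximal representation of $G$.

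First I would recover the limit maps. The lifted section $\widetilde\sigma\colon\cflow\to\mathcal X$ is $\varrho$-equivariant and constant along flow lines, hence factors through a continuous $\varrho$-equivariant map $\partial_\infty\Gamma^{(2)}\to\mathcal X\subset G/P^+\times G/P^-$; its two coordinates give continuous $\varrho$-equivariant maps $\xi^+\colon\bdry\to G/P^+$ and $\xi^-\colon\bdry\to G/P^-$ which are transverse, i.e.\ $(\xi^+(x),\xi^-(y))\in\mathcal X$ whenever $x\neq y$. Next I would show that every infinite-order $\gamma\in\Gamma$ has proximal image $\varrho(\gamma)$ relative to $G/P^+$. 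By the north--south dynamics on $\bdry$, such a $\gamma$ has attracting/repelling fixed points $\gamma_+,\gamma_-$ and its axis is a periodic flow line. The contraction of $\sigma^*X^+$ along this periodic orbit forces $\varrho(\gamma)$ to contract a neighborhood of $\xi^+(\gamma_+)$ in $G/P^+$ exponentially, with $\xi^+(\gamma_+)$ as attracting fixed point; equivariance of $\xi^+$ together with transversality identifies the complementary repelling datum with $\xi^-(\gamma_-)$. Thus $\varrho(\Gamma)$ is a proximal subgroup.

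For the quantitative, uniform step I would fix a finite-dimensional representation $\Lambda\colon G\to\SL(V)$ together with a point whose stabilizer is $P^+$, realizing $G/P^+$ as a $G$-orbit in $\mathbb P(V)$ via a proximal (Pl\"ucker-type) embedding $\iota$, chosen so that $g\in G$ is proximal relative to $G/P^+$ exactly when $\Lambda(g)$ is a proximal matrix, and so that $(r,\epsilon)$-proximality in $\mathbb P(V)$ pulls back to $(r',\epsilon')$-proximality relative to $G/P^+$ for comparable constants (using that $\iota$ is bi-Lipschitz onto its image and that $\iota(\nt(x^-))$ lies in a controlled hyperplane section). I would then apply Theorem 4.1, in the more general form referred to in the excerpt, to $\Lambda(\varrho(\Gamma))<\SL(V)$ to obtain a finite set realizing $(r,\epsilon)$-proximality in $\mathbb P(V)$ for every element, lift it to a finite set $S\subset\varrho(\Gamma)$ by choosing preimages, and transport the conclusion back along $\iota$ to obtain $(r',\epsilon')$-proximality relative to $G/P^+$. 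The symmetric argument, using $\xi^-$ and the dilation of $\sigma^*X^-$, then yields (AMS)-proximality with respect to $G/P^-$ as well.

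The hard part will be this third step, and within it two points. One must verify that the chosen $\Lambda$ genuinely converts the flag-variety notion of $(r,\epsilon)$-proximality into the projective one with \emph{uniformly} comparable constants; this requires controlling $\iota$ near the non-transverse locus $\nt(x^-)$, precisely where the projective distance and the intrinsic $G/P^+$-distance can diverge. The second point is the irreducibility hypothesis of Theorem 4.1: $\Lambda(\varrho(\Gamma))$ need not act strongly irreducibly on all of $V$, so one must either restrict to the subrepresentation it generates or invoke the more general Abels--Margulis--Soifer statement that dispenses with full strong irreducibility. I expect the uniform comparison of the two proximality notions near $\nt(x^-)$ to be the genuine obstacle, since everything else is either soft (recovering limit maps, proximality of single elements) or a direct citation.
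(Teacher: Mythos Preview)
The paper does not prove this theorem at all: it is stated in the appendix purely as a citation from \cite{GW2}, with no argument given. So there is no ``paper's own proof'' to compare against; the authors simply invoke the result as a black box.

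Your sketch is a reasonable outline of how one would actually prove the statement, and it is in the spirit of the original argument in \cite{GW2}. A few remarks. First, you did not address the finite-kernel condition in the definition of an (AMS)-proximal \emph{representation}; this is a standard consequence of the Anosov property (the limit maps are injective and equivariant, forcing $\ker\varrho$ to act trivially on $\bdry$, hence to be finite), but it should be mentioned. Second, the two difficulties you flag are exactly the right ones. The irreducibility issue is handled in \cite{GW2} by passing to the Zariski closure of $\varrho(\Gamma)$ and using that the limit set is contained in a single closed orbit of this Zariski closure; one does not need strong irreducibility on all of $V$. The uniform comparison of $(r,\epsilon)$-proximality under the Pl\"ucker embedding is indeed delicate near $\nt(x^-)$, but it works because $\iota(\nt(x^-))$ is the intersection of $\iota(G/P^+)$ with a projective hyperplane, and $\iota$ is a smooth embedding of a compact manifold. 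So your plan would go through with some care, but since the paper treats this as a known input, no proof is expected here.
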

\subsection{\texorpdfstring{Sections over $\flow$}{Sections over the flow space}} \label{sec:sections}
We now explain how to construct sections over the flow space $\flow$ which are differentiable along flow lines. The construction is based on a partition of unity argument, making sure that the bump functions are differentiable along flow lines. Another issue is that the action of $\Gamma$ on $\cflow$ may have fixed points, so we have to be careful defining ``nice'' neighborhoods.\\
Recall from section \ref{sec:geodesic:flow} that $\cflow$ is equipped with a metric which is unique up to H\"older equivalence. This metric is bi-Lipschitz with respect to the product metric of the visual metric on $\bdry$ and the standard metric on $\bR$. For $x = (a,b,t) \in \cflow$ and $\epsilon > 0$, we define
\[U^\epsilon_{x}:=B_\epsilon(a,b)\times(t-\epsilon,t+\epsilon)\subset\bdry^{(2)}\times\bR,\]
where $B_\epsilon$ denotes the $\epsilon$-ball in $\partial\Gamma^2$. 

As $\Gamma$ acts properly on $\cflow$, stabilizers of points in $\cflow$ are finite. It also allows us to find a good set of neighborhoods: Since
\begin{align*}
    f\colon \Gamma \times \cflow & \to \cflow \times \cflow \\
    (\gamma,x) & \mapsto (\gamma x,x)
\end{align*}
is proper, for any compact neighborhood $x \in K$ of a point $x\in\cflow$, 
\[ \pi_1(f^{-1}(K\times K)) = \{ \gamma\in\Gamma \mid \gamma K \cap K \neq \emptyset \} =: \Gamma_K \]
is finite. If $\Gamma_K \setminus \Gamma_x$ is nonempty, for every $\gamma \in (\Gamma_K \setminus \Gamma_x)$, shrink the neighborhood to $K' \subset K$ such that $\gamma K' \cap K' = \emptyset$. After doing this finitely many times, we can assume that $\Gamma_K = \Gamma_x$. Pick $\epsilon>0$ small enough such that $U^\epsilon_x \subset K$. We distinguish two cases, depending on whether $\Gamma_x$ is trivial or not.
\begin{enumerate}
    \item Assume first that $\Gamma_x = \{1\}$. Write $x = (a,b,t)$. Let $\phi: \bdry^{(2)} \to \bR$ be a H\"older continuous bump function which is positive on $B_\epsilon(a,b)$ and zero elsewhere, and $\psi:\bR\to\bR$ a smooth bump function which is positive on $(t-\epsilon,t+\epsilon)$ and zero elsewhere. Then 
    \begin{align*} 
    \theta: \cflow & \to \bR \\
     (c,d,s) & \mapsto \phi(c,d)\psi(s) 
     \end{align*}
    is a bump function at $x=(a,b,t)$ which is positive on $U^\epsilon_x$ and zero elsewhere, and which is smooth along flow lines. Moreover, the derivative along flow lines is again H\"older continuous and smooth along flow lines. Since $\epsilon$ is chosen such that $\Gamma_{U^\epsilon_x} = \{1\}$, it projects to a bump function at $\pi(x)$ on $\flow$ with the same properties.
    \item Assume now that $\Gamma_x \neq \{1\}$. Let 
    \[ V_x := \bigcup\limits_{\gamma\in\Gamma_x} \gamma U^\epsilon_x, \]
    and observe that $\Gamma_{V_x}=\Gamma_x$. 
    Define $\theta$ as before to be a bump function which is positive on $U^\epsilon_x$ and zero elsewhere, and set
    \[ \vartheta := \sum\limits_{\gamma\in\Gamma_x} \theta\circ\gamma. \]
    Then $\vartheta$ is H\"older continuous, smooth along flow lines, positive on $V_x$ and zero elsewhere. Since $\Gamma_{V_x} = \Gamma_x$ and $\vartheta$ is invariant under $\Gamma_x$, it projects to a bump function on $\flow$ with the same properties. Note that $V_x$ gets arbitrarily small as $\epsilon$ gets close to $0$.
\end{enumerate}
We can use these bump functions to construct sections of the affine bundle 
\[ \mathsf{R}_\rho = \Gamma \backslash \left(\cflow \times \bR^{2n+1}\right) . \]
For every point $z\in\flow$, pick a neighborhood $U_z'$ such that $\mathsf{R}_\rho|_{U_z'}$ is trivial. Then, pick a potentially smaller neighborhood $U_z \subset U_z'$ such that the above construction yields a bump function $\alpha_z:\flow\to\bR$ which is positive on $U_z$ and zero elsewhere. By compactness of $\flow$, finitely many such neighborhoods $U_z$ cover $\flow$. Denote them by $U_{z_i}, \ 1\leq i \leq k$. After normalizing, we may assume that $\sum_i \alpha_{z_i} =1$. Letting $s_{z_i}:U_{z_i} \to \mathsf{R}_\rho|_{U_{z_i}}$ denote a constant section (with respect to some local trivialization), observe that the affine combination
\[ s = \sum\limits_i \alpha_{z_i}s_{z_i} \]
is a well-defined section of $\mathsf{R}_\rho$ which is H\"older continuous and smooth along flow lines. Note that arbitrary addition of sections is not well-defined since the bundle is affine, but affine combinations are.

\bibliography{bibliography.bib}
\bibliographystyle{alpha}

\end{document}